\numberwithin{equation}{section}
\def\CC{{\mathbb C}}
\def\PP{{\mathbb P}}
\def\QQ{{\mathbb Q}}
\def\RR{{\mathbb R}}
\def\ZZ{{\mathbb Z}}
\def\intomega{\Omega^{\circ}}
\def\G{{\Gamma}}
\def\spec{{\rm Spec}}
\def\bs{\backslash}
\def\bss{{\bs\!\bs}}
\def\Ecal{{\mathcal E}}
\def\Hcal{{\mathcal H}}
\def\Mcal{{\mathcal M}}
\def\Scal{{\mathcal S}}
\newcommand\Hom{\operatorname{Hom}}
\newcommand\pic{\operatorname{Pic}}
\newcommand\rk{\operatorname{rk}}
\newcommand\GL{\operatorname{GL}}
\newcommand\Res{\operatorname{Res}}
\newcommand\SL{\operatorname{SL}}
\newcommand\Orth{\operatorname{O}}
\newcommand\SU{\operatorname{SU}}
\newtheorem{theorem}{Theorem}[section]
\newtheorem{lemma}[theorem]{Lemma}
\newtheorem{proposition}[theorem]{Proposition}
\newtheorem{corollary}[theorem]{Corollary}
\theoremstyle{definition}
\newtheorem{definition}[theorem]{Definition}
\theoremstyle{remark}
\newtheorem{remark}[theorem]{Remark}
\title[Periods of some hypersurfaces]{Fermat varieties and the periods of some hypersurfaces}
\author[Eduard Looijenga]{Eduard Looijenga}
\address{Mathematisch Instituut\\
Universiteit Utrecht\\ P.O.~Box 80.010, NL-3508 TA Utrecht\\
Nederland}
\email{E.J.N.Looijenga@uu.nl}
\subjclass[2000]{14J70, 14J10}
\keywords{Fermat hypersurface, period map}
\begin{document}

\begin{abstract} 
The variety of all smooth  hypersurfaces of given degree and dimension has 
the  Fermat hypersurface as a natural base point. In order to study the period map for such varieties,  we first determine the integral polarized Hodge structure  of the primitive cohomology of a Fermat hypersurface (as a module over the automorphism group of the hypersurface). We then focus on the degree 3 case and show that the period map for cubic  fourfolds as analyzed by R.~Laza and the author gives complete information about the period map for cubic hypersurfaces of lower  dimension dimension. In particular, we thus recover the results of Allcock-Carlson-Toledo  on  the cubic surface case.
\end{abstract}

\maketitle

\section{Introduction}
For a smooth projective manifold $X$, its polarized Hodge structure is a powerful invariant,
often so strong that its isomorphism type determines the isomorphism type of $X$.
If $X$ is given as a hypersurface in $\PP^n$ of degree $d$, say, then the interesting part of this polarized Hodge structure is the one on the primitive cohomology in degree 
$n-1$,  but it is in general hard to characterize the structures that occur in this manner.  A well-known trick is to look instead at cyclic coverings of $\PP^n$ of degree $d$ branched along $X$: we then get another hypersurface $Y\subset \PP^{n+1}$, still of degree $d$, but now of dimension $n$ and equipped with an action of $\mu_d$. 
The idea is to consider the polarized Hodge structure on the primitive part of $H^n(Y)$ with its $\mu_d$-action instead (or pass to the substructure that remains if we pass to an eigenspace of the
$\mu_d$-action for some primitive character of $\mu_d$). This approach at least 
takes into account that $X$ is not just any complex manifold, but is given as a hypersurface in a projective space. By iterating this procedure we find  a hypersurface in $\PP^{n+k}$
of degree $d$ with an action of $\mu_d{}^k$, or rather, of its semi-direct product with $\Scal_k$.
Reversing this approach, we see that once we have a good understanding of the period map for degree $d$ hypersurfaces in $\PP^N$, then there is a fair chance that we can also attain this for degree $d$ hypersurfaces of lower dimension.  Admittedly this method has its limitations, but we will show here that it works nicely for the first interesting value for $d$, $d=3$, and $N=5$, that is, the case of cubic fourfolds, for which Voisin in her thesis proved a Torelli theorem and for which Laza \cite{laza2} and the author \cite{looijenga:4f}  independently recently characterized the image of the period map. The goal of this paper is to show how this leads to similar characterizations for cubic threefolds and cubic surfaces, thus recovering earlier work of Allcock-Carlson Toledo \cite{act1},\cite{act2} and Looijenga-Swierstra \cite{ls2}. 
The top-down approach that we take here appears in Swierstra's thesis \cite{swierstra}, but  his  presentation is somewhat different.
\\

The paper is organized as follows. The contents of Section 2 might be summed up by saying  that it gives a detailed description of the primitive homology lattice of a Fermat variety together with all the structure that is present:  the intersection form, the Hodge decomposition and the action of the automorphism group of that variety on  this structure. This was certainly known in some way over $\QQ$, but for the purpose of this paper we need it  over $\ZZ$ and we have not been able to find this in the literature. It may thus have an independent interest. A theorem of F.~Pham  on the vanishing lattices of certain singularities proved to be quite useful here.

In Section 3 we determine what happens to the geometric invariant theory (and the markings) of homogeneous forms of degree $d$ if we add to such a form the $d$th power of a new variable. This is useful to know if we want to compare the resulting period maps.

The last two sections deal specifically with cubic hypersurfaces and their period maps:
as explained above, we start out with the result for cubic fourfolds and descend from that
to the cubic threefolds and cubic surfaces. The main results are in \S 5.
\\

I thank Igor Dolgachev for help with Proposition \ref{prop:orbitspacesemb}.  The observation concerning (semi)stability is due to him.
\\

This paper is based on the talk I gave at the conference \emph{Algebraic and Arithmetic Structures of Moduli Spaces}  at Hokkaido University 2007. I thank Professors Nakamura and Weng for organizing this wonderful meeting.

\section{Cohomology of the Fermat varieties}

Throughout this section we fix an integer $d\ge 3$. Since we later take $d=3$,  we suppress $d$ in the notation.
We start out with the Fermat hypersurface in $Y_n\subset \PP^{n+1}$ defined by the equation 
$F_n(Z):=\sum_{i=0}^{n+1} Z_i^d=0$.
If we identify $Y_{n-1}$ with the hyperplane section of $Y_n$ defined by $Z_0=0$, then
$\dot Y_n:=Y_n-Y_{n-1}$ is the affine Fermat variety defined by $\sum_{i=1}^{n+1} z_i^d=-1$. 
Let the group $\mu_d{}^{n+2}$ act diagonally on $\CC^{n+2}$. The projectivized action (on $\PP^{n+1}$)
has as kernel the scalars, that is, the diagonally embedded $\mu_d\subset\mu_d{}^{n+2}$. 
Let us fix a generator $u\in\mu_d$ and denote the corresponding generator of the $i$-th summand in
$\mu_d{}^{n+2}$ by $u_i$, $i=0,\dots , n+1$. So $\mu_d{}^{n+2}/\mu_d$ is the quotient of 
$\mu_d{}^{n+2}$ by the subgroup generated by $u_0u_1\cdots u_{n+1}$. Permutation of coordinates
defines an action of the symmetric group $\Scal_{n+2}$ on $\CC^{n+2}$ and forms with $\mu_d{}^{n+2}$
a semidirect product acting on the same space. The associated action of
$\Scal_{n+2}\ltimes \mu_d{}^{n+2}/\mu_d$ on $\PP^{n+1}$ evidently preserves $Y_n$.
(T.~Shioda, who studied the arithmetical properties of these varieties in depth, determined
among other things the full automorphism group of $Y_n$ in all characteristics, at least  when $(n,d)\not= (2,4)$; 
he finds that in characteristic zero---the case we are dealing with---it is not bigger than  $\Scal_{n+2}\ltimes \mu_d{}^{n+2}/\mu_d$, \cite{shioda}). The $\Scal_{n+2}\ltimes \mu_d{}^{n+2}/\mu_d$-stabilizer of the hyperplane $Z_0=0$ in $\PP^{n+1}$ is $\Scal_{n+1}\ltimes \mu_d{}^{n+1}$ (where $\mu_d{}^{n+1}$ is the group generated by $u_1,\dots ,u_{n+1}$) and so that group acts on the affine Fermat variety $\dot Y_n$. 

\subsection*{The primitive homology of a Fermat variety} 
The primitive homology of $Y_n$, $H^\circ_n(Y)$ is by definition the kernel of $H_n(Y)\to H_n(\PP^{n+1})$.
When $n$ is odd, this is all of $H_n(Y)$. The intersection pairing  $H_n(Y)\otimes H_n(Y)\to \ZZ$ is then antisymmetric and perfect by Poincar\'e duality. When  $n$ is even, this is a sublattice of $H_n(Y)$  of corank one: if $\kappa\in H^2(Y_n)$ denotes the hyperplane class, then $H^\circ_n(Y)$ is the kernel
of the linear form on $H_n(Y)$ defined by $\kappa^{n/2}$. The intersection pairing  $H_n(Y)\otimes H_n(Y)\to \ZZ$ is symmetric and perfect by Poincar\'e duality. The Poincar\'e dual of  $\kappa^{n/2}$ is as a cycle represented by a linear section and has self-intersection $d$. The orthogonal complement of that cycle, $H^\circ_n(Y)$, has therefore a discriminant group cyclic of order $d$. The classical Lefschetz theory shows that this orthogonal complement is generated by vanishing cycles. These have self-intersection $\pm 2$ and so $H^\circ_n(Y)$ is an even lattice.

We think of $\dot Y_n$ as the Milnor fiber of the isolated hypersurface singularity defined by $\sum_{i=1}^{n+1} z_i^d=0$. It is known to have the homotopy type of a bouquet of $n$-spheres. 

We denote by $\kappa\in H^2(Y_n)$ the hyperplane class.
We have the following exact (Gysin) sequence
\[
0\to\! H_{n+1}(Y_n)\to\! H_{n-1}(Y_{n-1})\to\! \tilde H_n(\dot Y_n)\to\! H_n(Y_n)\to\! H_{n-2}(Y_{n-1})\to\! 0,
\]
where $\tilde H$ stands for reduced (co)homology.

Assume first $n=2m$ is even. Then $H_{n+1}(Y_n)=0$ and $H_{n-2}(Y_{n-1})$ is Poincar\'e dual to $\kappa^{m-1}$. This means
that the above sequence yields the exact sequence
\[
0\to H_{n-1}(Y_{n-1})\to \tilde H_n(\dot Y_n)\to H^o_n(Y_n)\to 0,
\]
where $H^o_n(Y_n)$ denotes the primitive homology of $Y_n$, i.e., the kernel of the linear form $\int\kappa^m:H_n(Y_n)\to \ZZ$. Dually, we have the exact sequence
\[
0\to H^n_o(Y_n)\to  \tilde H^n(\dot Y_n)\to H^{n-1}(Y_{n-1})\to 0,
\]
where $H^n_o(Y_n):=H^n(Y)/\ZZ\kappa^m$. Poincar\'e duality identifies $H_n(Y_n)$ with 
$H^n(Y_n)$ and hence $H^o_n(Y_n)$ with $H^n_o(Y_n)$. The composite
\[
j: \tilde H_n(\dot Y_n)\to H^o_n(Y_n)\cong H^n_o(Y_n)\to \tilde H^n(\dot Y_n)
\]
is just the  Poincar\'e duality map of $\tilde H_n(\dot Y_n)$. This shows in particular that 
$H^o_n(Y_n)$ with its intersection pairing can be identified with the 
Milnor lattice $\tilde H_n(\dot Y_n)$ modulo its radical. It is clear that this
holds equivariantly relative to the $\Scal_{n+1}\ltimes\mu_d{}^{n+1}$-action. 

If $n$ is odd, then the above discussion leads to the following exact sequence
$0\to H^\circ_{n-1}(Y_{n-1})\to H_n(\dot Y_n)\to H_n(Y_n)\to 0$
and so  for all $n$ we have the exact sequence
\[
0\to H^\circ_{n-1}(Y_{n-1})\to \tilde H_n(\dot Y_n)\to H^\circ_n(Y_n)\to 0.
\]
We have an intersection pairing on the last two nonzero terms and the 
map between them respects these pairings. Since the one on $H^\circ_n(Y_n)$ is nondegenerate,
we see that the whole sequence can be reproduced from the  intersection pairing
on $\tilde H_n(\dot Y_n)$: $H^\circ_{n-1}(Y_{n-1})$ is its kernel and $H_n(\dot Y_n)$ is its maximal
nondegenerate quotient.

\subsection*{The primitive Fermat lattice} Suppose $L$ is an abelian group, $(a,b)\in L\times L\mapsto a\cdot b\in \ZZ$ a bilinear map and $G$ a \emph{finite abelian} group
acting on $L$ in such a manner as to preserve the pairing: $ga\cdot gb=a\cdot b$. We  regard $L$ as a
$\ZZ G$-module and extend  the pairing sesquilinearly as
\[
(a,b)\in L\times L\mapsto a\star b:=\sum_{g\in G} (a\cdot gb)g\in\ZZ G.
\]
(This means that the pairing is $\ZZ G$-linear in the first variable and antilinear in the second relative to the 
involution `bar' in $\ZZ G$ defined by  $g\in G\mapsto \bar g:=g^{-1}$.) Notice that
the sesquilinear form is \emph{hermitian} ($b\star a=\overline{a\star b}$) if the given form is symmetric and is
\emph{antihermitian} ($b\star a=-\overline{a\star b}$) if it is antisymmetric. 
\\

F.~Pham  determined this sesquilinear extension for $L=H_n(\dot Y_n)$ and $G=\mu_d{}^{n+1}$. 

\begin{proposition}[Pham \cite{pham}]
The group $H_n(\dot Y_n)$ is as a  $\ZZ\mu_d{}^{n+1}$-module generated by a single element $e_n$ that makes it as such isomorphic to the quotient ring 
\[
R_{n+1}:=\ZZ \mu_d{}^{n+1}/\Big(\sum_{k=0}^{d-1}u_\nu^k\Big)_{\nu=1,\dots ,n+1}.
\]
The  sesquilinear extension of the intersection form is $(-1)^{n(n+1)/2}\star$, where $\star$ is given by
\[
e_n\star e_n=(1-\overline{u_1u_2\cdots u_{n+1}})\prod_{\nu=1}^{n+1} (1-u_\nu)\in \ZZ \mu_d{}^{n+1}.\qedhere
\]
Furthermore, $e_n$ transforms under permutation of  
$z_1,\dots ,z_{n+1}$ according to the sign character.
\end{proposition}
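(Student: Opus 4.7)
My plan is to use the Thom--Sebastiani decomposition of the Milnor fiber as a join. Since $F_n(z)=z_1^d+\cdots+z_{n+1}^d$ is a sum of polynomials in disjoint variables, the Milnor fiber $\dot Y_n$ is $\mu_d{}^{n+1}$-equivariantly homotopy equivalent to the join $M_1*\cdots *M_{n+1}$, where each $M_\nu$ is the Milnor fiber of $z_\nu\mapsto z_\nu^d$: a set of $d$ points on which $\mu_d=\la u_\nu\ra$ acts simply transitively. The equivariant K\"unneth formula for joins then gives a $\ZZ\mu_d{}^{n+1}$-module isomorphism
\[
\tilde H_n(\dot Y_n)\;\cong\;\tilde H_0(M_1)\otimes_\ZZ\cdots\otimes_\ZZ\tilde H_0(M_{n+1}).
\]
For each $\nu$ choose $a_\nu\in M_\nu$ and set $f_\nu:=[a_\nu]-[u_\nu a_\nu]\in\tilde H_0(M_\nu)$. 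Then $f_\nu$ generates $\tilde H_0(M_\nu)$ as a $\ZZ\mu_d$-module, with annihilator exactly the norm ideal $(\sum_{k=0}^{d-1}u_\nu^k)$, since the norm element acts as zero on a free $\mu_d$-set. Consequently $e_n:=f_1*\cdots *f_{n+1}$ is a $\ZZ\mu_d{}^{n+1}$-module generator of $\tilde H_n(\dot Y_n)$ that realises the claimed isomorphism with $R_{n+1}$.

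The intersection form is the computational heart of the proof. For $n=0$ a direct count of signed pairings of the four $0$-cycles $f_1, u_1^kf_1$ in $M_1$ yields $f_1\star f_1=(1-u_1)(1-\bar u_1)$, matching the formula (with sign $(-1)^{0\cdot 1/2}=1$). For general $n$ I would follow Pham's original strategy: exhibit an explicit triangulation of $\dot Y_n$ whose top-dimensional simplices are naturally labelled by $g\in\mu_d{}^{n+1}$, and whose pairwise signed intersection numbers $e_n\cdot ge_n$ can be read off combinatorially. The naive expectation from the tensor structure alone is the product $\prod_\nu (1-u_\nu)(1-\bar u_\nu)$, but the actual answer in $R_{n+1}$ is the collapsed form $(1-\overline{u_1\cdots u_{n+1}})\prod_\nu (1-u_\nu)$. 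The extra factor $(1-\overline{u_1\cdots u_{n+1}})$ encodes the Milnor monodromy of $F_n$, which under Thom--Sebastiani is the product $u_1\cdots u_{n+1}$ of the factor-wise monodromies. The overall sign $(-1)^{n(n+1)/2}$ is the Koszul sign for graded-reordering the $n+1$ join parameters when comparing the intersection pairing on the $n$-dimensional join to the tensor product of pairings on its factors.

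For the $\Scal_{n+1}$-action: the symmetric group permutes the join factors $M_\nu$. Since each $f_\nu$ is a $0$-class and the join product is graded anticommutative with respect to the shifted degree (a $p$-class joined to a $q$-class becomes a $(p+q+1)$-class, and swap introduces a sign $(-1)^{(p+1)(q+1)}$), transposing two adjacent factors in $f_1*\cdots *f_{n+1}$ contributes $(-1)^{1\cdot 1}=-1$; hence $e_n$ transforms by the sign character. The principal obstacle in this program is the intersection computation: the join homotopy equivalence alone does not expose the monodromy correction $(1-\overline{u_1\cdots u_{n+1}})$. Pham resolves this via his explicit polyhedral model, but one could alternatively try induction on $n$ using the Gysin sequence $0\to H^\circ_{n-1}(Y_{n-1})\to\tilde H_n(\dot Y_n)\to H^\circ_n(Y_n)\to 0$ from the previous subsection, tracking how the intersection pairing restricts to the kernel and descends to the quotient. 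Either route requires careful bookkeeping of signs and of the $\mu_d{}^{n+1}$-twist throughout.
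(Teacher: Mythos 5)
Be aware first of what the paper actually does here: its ``proof'' consists of a single remark, namely that Pham works with the fiber $\sum z_\nu^d=1$ rather than $\sum z_\nu^d=-1$, and that rescaling the coordinates by a $2d$-th root of unity converts one normalization into the other. Everything of substance is delegated to \cite{pham}. Your outline, by contrast, reconstructs Pham's own argument: the Sebastiani--Thom join decomposition of the Milnor fiber of $\sum z_\nu^d$, the generator $e_n=f_1*\cdots*f_{n+1}$ with $f_\nu=[a_\nu]-[u_\nu a_\nu]$, the identification of the annihilator with the norm ideals (your surjection onto $R_{n+1}$ is an isomorphism because both sides are free $\ZZ$-modules of rank $(d-1)^{n+1}$), and the sign character from the graded anticommutativity of the join. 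All of that is correct and is genuinely more than the paper offers.

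The gap, which you candidly flag yourself, is that the intersection formula --- the actual content of the proposition --- is asserted rather than proved: you describe the correction factor $(1-\overline{u_1\cdots u_{n+1}})$ as ``encoding the monodromy'' and then defer to Pham's polyhedral model. There is a cleaner way to close this than the triangulation: it is the \emph{Seifert form} $L$, not the intersection form, that is multiplicative under join (this is the Sebastiani--Thom theorem for variation structures), so $L(e_n,e_n)=\pm\prod_\nu(1-u_\nu)$, and the intersection form is then $L+(-1)^n\bar L$. A one-line computation, using $\overline{u_1\cdots u_{n+1}}\prod_\nu(1-u_\nu)=(-1)^{n+1}\prod_\nu(1-\bar u_\nu)$, shows that $\prod_\nu(1-u_\nu)+(-1)^n\prod_\nu(1-\bar u_\nu)=(1-\overline{u_1\cdots u_{n+1}})\prod_\nu(1-u_\nu)$, which is exactly the stated formula; your ``naive expectation'' $\prod_\nu(1-u_\nu)(1-\bar u_\nu)$ is what one would get by wrongly assuming the intersection form itself to be multiplicative (the two happen to agree when $n=0$, which is why your base case checks out). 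As written, your argument establishes the module structure and the sign character but not the pairing; either supply the Seifert-form step above, carry out Pham's cell-by-cell intersection count, or do as the paper does and cite \cite{pham}, in which case you must also add the normalization remark about $=1$ versus $=-1$.
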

\begin{proof}
Pham  actually considers the affine variety $\sum _{\nu=1}^{n+1} z_\nu^d=1$ (rather than $=-1$), but a scalar multiplication by a root of unity of order $2d$ turns this into the statement above.
\end{proof}

 We should perhaps also remark that multiplication by $(1-u)$ identifies  $\ZZ \mu_d/(\sum_{k=0}^{d-1}u^k)$ with
the ideal $(1-u)\ZZ\mu_d$. This shows that the above formula indeed defines a sesquilinear pairing.

In $ \ZZ[\mu_d{}^{n+1}]$ the element $w:=(1-\overline{u_1u_2\cdots u_{n+1}})\prod_{\nu=1}^{n+1} (1-u_\nu)$ satisfies $\overline{w}=(-1)^nw$, which shows that the sesquilinear map in the preceding proposition is hermitian when $n$ is even and antihermitian when $n$ is odd (as it should, since the intersection pairing on $H_n(\dot Y_n)$ is in these cases symmetric resp.\
antisymmetric). Let us also notice that the $\ZZ$-rank of $R_{n+1}$ is $(d-1)^{n+1}$.

\begin{corollary}\label{cor:pham}
The group $H^\circ_n(Y_n)$ is as a  $\ZZ[\mu_d{}^{n+2}/\mu_d]$-module generated by the image $e'_n$ of $e_n$ and thus becomes isomorphic to the quotient ring 
\[
R'_{n+1}:=\ZZ[\mu_d{}^{n+2}/\mu_d]/(\sum_{k=0}^{d-1}u_\nu^k,\; \nu=0,\dots ,n+1)
\]
(which also equals $R_{n+1}/\big(\sum_{k=0}^{d-1}(u_1\cdots u_{n+1})^k\big)$ as a $\ZZ\mu_d{}^{n+1}$-module). The generator $e'_n$ transforms under permutation of the
variables $Z_1,\dots ,Z_{n+1}$ according to the sign character and is up to sign the
only $\ZZ[\mu_d{}^{n+2}/\mu_d]$-module generator with that property. The  sesquilinear extension of the intersection form is $(-1)^{n(n+1)/2}\star$, where $\star$ is given by
\[
e'_n\star e'_n=\prod_{\nu=0}^{n+1} (1-u_\nu)\in \ZZ[\mu_d{}^{n+2}/\mu_d].
\]
Moreover, the kernel of the natural map $R_{n+1}\to R'_{n+1}$ is as a $R_n$-module generated 
$(1-v_n)\sum_{0\le k\le l\le d-2} u_{n+1}^kv_n^le_n$, where $v_n:=u_1\cdots u_n$; this kernel 
is a copy of $R'_n$.
\end{corollary}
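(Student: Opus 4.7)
The plan is to push Pham's isomorphism $\tilde H_n(\dot Y_n) \cong R_{n+1}$ through the short exact sequence
\[
0 \to H^\circ_{n-1}(Y_{n-1}) \to \tilde H_n(\dot Y_n) \to H^\circ_n(Y_n) \to 0
\]
established just before Pham's proposition, identifying the quotient with $R'_{n+1}$ and the kernel with a copy of $R'_n$ sitting inside $R_{n+1}$.

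First, since $\mu_d{}^{n+2}/\mu_d$ satisfies $u_0=\overline{u_1\cdots u_{n+1}}$, the $\ZZ \mu_d{}^{n+1}$-module $R_{n+1}$ already carries a $\ZZ[\mu_d{}^{n+2}/\mu_d]$-action; the extra relation $\sum_k u_0^k=0$ translates (via the identity $\sum_k w^{-k}=\sum_k w^k$) into $W:=\sum_k(u_1\cdots u_{n+1})^k=0$, giving the promised description $R'_{n+1}=R_{n+1}/(W)$. To see $W$ really does act as zero on $H^\circ_n(Y_n)$, I would exploit the fact that $Y_n/\langle u_0\rangle\cong\PP^n$ (one solves for $Z_0^d$ in the equation of $Y_n$ viewed inside a weighted projective space), whence the $u_0$-invariants of $H^n(Y_n)$ lie entirely in the non-primitive part $\ZZ\kappa^{n/2}$. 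A character-theoretic rank count over $\CC$---matching the standard tally $\#\{(a_0,\ldots,a_{n+1})\in\{1,\ldots,d-1\}^{n+2}:\sum a_i\equiv 0\pmod d\}$ for primitive middle cohomology of a degree-$d$ Fermat---then upgrades the surjection $R_{n+1}/(W)\twoheadrightarrow H^\circ_n(Y_n)$ to an isomorphism.

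For the explicit description of the kernel, I would use the decomposition $R_{n+1}=R_n[u_{n+1}]/(\sum u_{n+1}^k)$, a free $R_n$-module of rank $d-1$ with basis $\{u_{n+1}^k\}_{k=0}^{d-2}$. Rewriting $W=\sum_k v_n^k u_{n+1}^k$ via $u_{n+1}^{d-1}=-\sum_{k=0}^{d-2}u_{n+1}^k$ and telescoping produces the identity $(1-v_n)\sum_{0\le k\le l\le d-2}u_{n+1}^k v_n^l=W$. A direct calculation shows $u_{n+1}\cdot W=v_n^{d-1}\cdot W$ in $R_{n+1}$, so the ideal $(W)\subset R_{n+1}$ is already cyclic as an $R_n$-module generated by $W$; its $R_n$-annihilator is then seen to be $(\sum_k v_n^k)$, identifying it with $R'_n=R_n/(\sum_k v_n^k)$. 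I expect this explicit identification to be the main technical step.

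Finally, Pham's formula $e_n\star e_n=(1-\overline{u_1\cdots u_{n+1}})\prod_{\nu=1}^{n+1}(1-u_\nu)$ pushes through the substitution $\overline{u_1\cdots u_{n+1}}=u_0$ directly to $e'_n\star e'_n=\prod_{\nu=0}^{n+1}(1-u_\nu)$, preserving the sign $(-1)^{n(n+1)/2}$; and the $\Scal_{n+1}$-sign-equivariance of $e'_n$ is inherited from Pham's corresponding statement for $e_n$. For the uniqueness up to sign, any sign-character module generator has the form $u\cdot e'_n$ with $u$ a unit in the ring $R'_{n+1}$ that is $\Scal_{n+1}$-invariant; a character-theoretic analysis of the invariant units then forces $u=\pm 1$.
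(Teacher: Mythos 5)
Your overall strategy---push Pham's isomorphism through the exact sequence $0\to H^\circ_{n-1}(Y_{n-1})\to \tilde H_n(\dot Y_n)\to H^\circ_n(Y_n)\to 0$---is the same as the paper's, and your telescoping identity for $W=(1-v_n)\sum_{0\le k\le l\le d-2}u_{n+1}^kv_n^l$ together with the free-basis argument for the $R_n$-module structure of the ideal $(W)$ is essentially the printed computation. The genuine divergence, and the genuine gap, is in how you identify $H^\circ_n(Y_n)$ with $R'_{n+1}$. The paper argues that the sesquilinear pairing descends to $R'_{n+1}$ and is nondegenerate there, so that $R'_{n+1}$ is the maximal nondegenerate quotient of $R_{n+1}$, matching the characterization of $H^\circ_n(Y_n)$ as the maximal nondegenerate quotient of $\tilde H_n(\dot Y_n)$. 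You instead produce a surjection $R'_{n+1}\twoheadrightarrow H^\circ_n(Y_n)$ (via $Y_n/\langle u_0\rangle\cong\PP^n$, which is fine) and invoke equality of ranks. That step does not close the argument: a surjection of finitely generated abelian groups of equal rank onto a free group is an isomorphism only if the source is torsion-free, and you never address the torsion of $R'_{n+1}=R_{n+1}/(\sum_k v^k)$. This is not a formality. Take $d=3$, $n=1$: in $R_2$ the element $\sum_{k=0}^{2}v^k=2+u_1+u_2+2u_1u_2$ generates an ideal of rank $2$ that has index $3$ in its saturation, namely the $v$-invariants $R_2^{v}$ (for instance $1+u_1+u_1u_2$ is $v$-invariant but is not in the ideal), so $R_2/(\sum_k v^k)$ acquires a $\ZZ/3$ summand, whereas $H^\circ_1(Y_1)=H_1$ of an elliptic curve is free. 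What your rank count actually proves is $H^\circ_n(Y_n)\cong R'_{n+1}/(\text{torsion})$, i.e.\ $R_{n+1}$ modulo the \emph{saturation} of $(\sum_k v^k)$, which is the radical of the intersection form. You must either establish torsion-freeness of $R'_{n+1}$ (which, as the example shows, is exactly where the delicate content sits and cannot simply be asserted) or switch to an argument that works with the radical directly, as the paper does.

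A secondary point: your argument for uniqueness of the sign-equivariant generator correctly reduces to classifying $\Scal_{n+1}$-invariant units of $R'_{n+1}$, but the conclusion that such a unit must be $\pm 1$ is too quick: $v=u_1\cdots u_{n+1}$ and its powers are $\Scal_{n+1}$-invariant units, and $v\,e'_n$ is again a module generator transforming by the sign character. An honest treatment of that clause has to explain in what sense these are excluded; the paper's own proof is silent on this clause, so there is nothing there to lean on either.
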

\begin{proof}
The natural map  $\mu_d{}^{n+1}\to\mu_d{}^{n+2}/\mu_d$ is an isomorphism of groups (we have $u_0=(u_1\cdots u_{n+1})^{-1}$ in $\mu_d{}^{n+2}/\mu_d$)  and we have a corresponding ring isomorphism between their group rings. Via that identification we
can write $e_n\star e_n=\prod_{\nu=0}^{n+1} (1-u_\nu)\in  \ZZ[\mu_d{}^{n+2}/\mu_d]$.
It is immediate that this pairing factors through $R'_{n+1}$ and it is easy to check that the resulting sesquilinear pairing is nondegenerate. The map $H_n(\dot Y_n)\to H^\circ_n(Y_n)$ has the same property (it forms the
maximal nongenerate quotient) and so via Pham's theorem we   
obtain an identification  $R'_{n+1}e'_n\cong H^\circ_n(Y_n)$.

To prove the last assertion, put $v:=u_1\cdots u_{n+1}=v_n u_{n+1}$ and regard $R'_{n+1}$ as a quotient of
$R_{n+1}$ by the ideal generated by $\sum_{k=0}^{d-1} v^k$. Since $(1-v)(\sum_{k=0}^{d-1} v^k)=0$, this ideal is also the $R_n$-submodule generated by that element. We then find that in $R_{n+1}$
\begin{multline*}
\sum_{k=0}^{d-1} v^k=\sum_{k=0}^{d-1} (v_n u_{n+1})^k
=\sum_{k=0}^{d-2} (v_n u_{n+1})^k -v_n^{d-1}\sum_{k=0}^{d-2} u_{n+1}^k=\\
=\sum_{k=0}^{d-2} u_{n+1}^k(v_n^k-v_n^{d-1})=
(1-v_n)\sum_{0\le k\le l\le d-2} u_{n+1}^kv_n^l.
\end{multline*}
Since $R_{n+1}$ is as a $R_n$-module freely generated by the element $(u_{n+1}^k)_{k=0}^{d-2}$, 
the $R_n$-annihilator of this element is in fact the $R_n$-annihilator of $1-v_n$, that is, $\sum_{k=0}^{d-1} v_n^k$,  and so the $R_n$-submodule generated by it is  isomorphic to $R_n/(1-v_n)R_n=R'_n$.
\end{proof}

\begin{remark}
I do not know whether $H_n^o(Y_n)$ admits a $\ZZ\mu_d{}^{n+1}$-module
generator which transforms under the full permutation group of $Z_0,\dots, Z_{n+1}$ according to the sign character.
\end{remark}

Here is a concrete description for   the bilinear form defined by $\star$. If 
$K:=(k_0,\dots ,k_{n+1})\in (\ZZ/d)^{n+2}$ and 
$I\subset \{0,\dots ,n+1\}$, then abbreviate $u^K:=u_0^{k_0}\dots u_{n+1}^{k_{n+1}}$ and $u_I:=\prod_{i\in I} u_i$. Now $(-1)^{n(n+1)/2} u^K\cdot u^L$ is the coefficient of $1$ in the expression 
$u^{K-L} (1-u_0)\dots (1-u_{n+1})$, where the latter is viewed as an element of the group ring
$\ZZ[\mu_d{}^{n+2}/\mu_d]$. We thus find
\[
(-1)^{n(n+1)/2} u^K\cdot u^L=
\begin{cases}
(-1)^{|I|}\! &\! \text{if\! $u^K=u^Lu_I$ for some nonempty $I$},\\
(-1)^{|I|+n}\! &\!\text{if\! $u_Iu^K=u^L$ for some nonempty $I$},\\
1+(-1)^n &\text{if\! $u^K=u^L$},\\
0 &\text{\! otherwise.}
\end{cases}
\]

\subsection*{A resolution and a rank computation} We have $R'_0=0$ and so repeated application of the previous corollary yields a resolution of $R'_{n+1}$
by $R_1$-modules:
\[
0\to R_1\to R_2\to\cdots \to R_{n+1}\to R'_{n+1}\to 0.
\]
It is clear that  $R_\nu$ is a free $R_1$-module of rank $(d-1)^{\nu-1}$
(we have $R_\nu =R_1\otimes_\ZZ R_{\nu-1}$ as $R_1$-modules and $\rk_\ZZ R_{\nu-1}=(d-1)^{\nu-1}$). So 
\begin{multline*}
\rk_{R_1} H^\circ_n(Y_n)=\rk_{R_1} R'_{n+1}=\\
=\sum_{\nu=1}^{n+1} (-1)^{n+1-\nu}(d-1)^{\nu-1}
=\frac{(d-1)^{n+1}+(-1)^n}{d}.
\end{multline*}
This also yields the familiar formula for $\rk_\ZZ H^\circ_n(Y_n)$ (namely the product of the above number with $(d-1)$).
For example, in the case that will mostly concern us,  $d=3$, we find for $n=1,2,3,4$ for $R_1$-ranks 
the values $1,3,5,11$ respectively and for the $\ZZ$-ranks twice these numbers. 

Let $\zeta_d:=\exp (2\pi \sqrt{-1}/d)$ and consider $\ZZ[\zeta_d]\otimes_{R_1} R'_{n+1}$, where $\ZZ[\zeta_d]$ is
a  $R_1$-module via $u_{n+1}\mapsto \zeta_d$. Then the bilinear form on $R'_{n+1}$ defines
a  sesquilinear form $R'_{n+1}\times R'_{n+1}\to \ZZ[\zeta_d]$  by
\[
(a \cdot b)_1:= \sum_{i\in (\ZZ/d)} (a\cdot u_{n+1}^ib)\zeta_d^i.
\]
The $\ZZ[\zeta_d]$-module $\ZZ[\zeta_d]\otimes_{R_1} R'_{n+1}$ is generated by the element $u^K$, with 
$K=(k_0,\dots ,k_{n})\in (\ZZ/d)^{n+1}$. For these generators our formulae yield
\[
(-1)^{n(n+1)/2} (u^K \cdot u^L)_1=
\begin{cases}
(-1)^{|I|}(1-\bar\zeta_d) \text{  if $u^K=u^Lu_I$ for some $I\not=\emptyset$},\\
(-1)^{|I|+n}(1-\zeta_d) \text{  if $u_Iu^K=u^L$ for some $I\not=\emptyset$},\\
(1-\bar\zeta_d) +(-1)^n(1-\zeta_d) \text{  if $u^K=u^L$},\\
0 \text{  otherwise.}
\end{cases}
\]
The form is hermitian or skew-hermitian according to whether $n$ is even or odd. In the last case we can make it hermitian by multiplication with the purely imaginary number $(\zeta_d+1)(\zeta_d-1)^{-1}$. This suggests to introduce the hermitian forms $h_+$ and $h_-$ by 
\[
h_\pm (u^K,u^L)=
\begin{cases}
(-1)^{|I|}(1\mp\bar\zeta_d) \text{  if $u^K=u^Lu_I$ for some $I\not=\emptyset$},\\
(-1)^{|I|}(1\mp\zeta_d) \text{  if $u_Iu^K=u^L$ for some $I\not=\emptyset$},\\
(2\mp\zeta_d\mp\bar\zeta_d)=(1\mp\zeta_d)(1\mp\bar\zeta_d) \text{  if $u^K=u^L$},\\
0 \text{  otherwise,}
\end{cases}
\]
where we recall that $K,L\in (\ZZ/d)^{n+1}$. If $n$ is even we have $h_+=\pm (\; \cdot\; )_1$ and for
$n$ odd, $h_-=\pm (\zeta_d+1)(\zeta_d-1)^{-1} (\; \cdot\; )_1$. 
(Strictly speaking it is not yet clear that $h_+$ resp.\ $h_-$ is well-defined for $n$ odd resp.\ $n$ even,
but the discussion below will show that this is so.)

An extension of this result will be of interest here. Denote by $\chi_k: R_k\to \ZZ[\zeta_d]$ the character that sends $u_\nu$ to $\zeta_d$,  $\nu=1,\dots ,k$ (we assume $k\ge 1$ here). This makes $\ZZ[\zeta_d]$ an $R_k$-module. For a given integer $m\ge 0$, we consider the exact sequence 
of $R_k$ modules 
\[
R_k\to R_{k+1}\to\cdots \to R_{k+m}\to R'_{k+m+1}\to 0
\]
and tensor it with $\chi_k: R_k\to \ZZ[\zeta_d]$. 
Since $R_{k+l}= R_k\otimes_\ZZ R_l$ is an equality of $R_k$-modules, we get an partial resolution of 
$\ZZ[\zeta_d]\otimes_{R_k}R'_{k+m+1}$ by free $\ZZ[\zeta_d]$-modules:
\[
\ZZ[\zeta_d]\to \ZZ[\zeta_d]\otimes_\ZZ R_1\to\cdots \to \ZZ[\zeta_d]\otimes_\ZZ R_{m}\to 
\ZZ[\zeta_d]\otimes_{R_k}R'_{k+m+1}\to 0.
\]
The first map is induced by multiplication by $(1-v_k)\sum_{0\le j\le l\le d-2} u_{k+1}^jv_k^l$, where $v_k=u_1\cdots u_k$. Therefore it sends $1\in \ZZ[\zeta_d]$ to the element $(1-\zeta_d^k)(\sum_{0\le j\le l\le d-2} \zeta_d^{kl}u_{k+1}^j)$. If $d$ does not divide $k$, then this map is injective,  so that we have in fact a complete resolution. Notice that it
is independent of $k$ (for $k=1$ we have the situation considered above). It follows
that then the $\ZZ[\zeta_d]$-rank of $\ZZ[\zeta_d]\otimes_{R_k}R'_{k+m+1}$ equals 
$d^{-1}\big((d-1)^{m+2}+(-1)^{m+1}\big)$.
We express this somewhat differently as follows. 

\begin{corollary}
Let the group $\Scal_k\ltimes \mu_d{}^k$ act on $\PP^{m+k+1}$ in the obvious manner on the \emph{last} $k$ coordinates (we prefer  this over the first) and denote (also) by $\chi_k$ the character $\Scal_k\ltimes \mu_d{}^k\to \ZZ[\zeta_d]$  that sends every generator of $\mu_d{}^k$ to $\zeta_d$. When $k\notin \ZZ d$, the isomorphism class
of the hermitian $\ZZ[\zeta_d]$-module $\ZZ[\zeta_d]\otimes_{\chi_k}H^\circ_{m+k}(Y_{m+k})$ is independent of $k$  with its associated hermitian form up to sign corresponding to $h_+$ or $h_-$ according to whether $m+k$ is even or odd. In particular, 
\begin{multline*}
\rk_{\ZZ[\zeta_d]}\ZZ[\zeta_d]\otimes_{\chi_k}H^\circ_{m+k}(Y_{m+k})=\\
=d^{-1}\big((d-1)^{m+2}+(-1)^{m+1}\big)=\rk_\ZZ H^\circ_{m}(Y_{m})+ (-1)^{m+1}.
\end{multline*}
\end{corollary}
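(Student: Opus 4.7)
The plan is to combine the complete resolution constructed just above with a standard Euler-characteristic computation, and to transport the hermitian structure between different values of $k$ via the Galois action on $\ZZ[\zeta_d]$. For the rank formula: when $d\nmid k$ the complex
\[
0\to \ZZ[\zeta_d]\to \ZZ[\zeta_d]\otimes_\ZZ R_1\to\cdots\to \ZZ[\zeta_d]\otimes_\ZZ R_{m+1}\to \ZZ[\zeta_d]\otimes_{R_k}R'_{k+m+1}\to 0
\]
is exact, and its $l$-th term is a free $\ZZ[\zeta_d]$-module of rank $(d-1)^l$. The alternating sum $\sum_{l=0}^{m+1}(-1)^{m+1-l}(d-1)^l$ telescopes via the finite geometric series to $d^{-1}((d-1)^{m+2}+(-1)^{m+1})$. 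The second equality with $\rk_\ZZ H^\circ_m(Y_m)+(-1)^{m+1}$ then follows by a direct arithmetic manipulation starting from the formula $\rk_\ZZ H^\circ_m(Y_m)=(d-1)\cdot\frac{(d-1)^{m+1}+(-1)^m}{d}$ recorded in the earlier Corollary.

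For the independence of the isomorphism class, I would observe that all intermediate differentials in this resolution come from the iterated short exact sequences $0\to R'_\nu\to R_{\nu+1}\to R'_{\nu+1}\to 0$ of the earlier Corollary and so are manifestly $k$-independent. The first differential, multiplication by $(1-\zeta_d^k)\sum_{0\le j\le l\le d-2}\zeta_d^{kl}u_{k+1}^j$, depends on $k$ only through the root of unity $\zeta_d^k$. When $\gcd(d,k)=1$ (automatic when $d$ is prime, in particular for $d=3$, the case driving the paper), the $\ZZ$-algebra automorphism of $\ZZ[\zeta_d]$ sending $\zeta_d\mapsto\zeta_d^k$ carries the $k=1$ element to its general-$k$ analogue, producing a Galois-twisted isomorphism of resolutions, and hence of their cokernels, which transports the induced hermitian form.

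To identify the form with $h_+$ or $h_-$, I would apply the explicit computation of $(\,\cdot\,)_1$ on the generators $u^K$ that was already carried out for $k=1$ in the paragraph preceding the statement: starting from Pham's formula $e'_n\star e'_n=\prod_\nu(1-u_\nu)$ and substituting $u_{k+1}\mapsto\zeta_d$, the form was shown to equal $\pm h_+$ when $n=m+1$ is even, and, after correction by the imaginary factor $(\zeta_d+1)(\zeta_d-1)^{-1}$, $\pm h_-$ when $n$ is odd. Via the Galois identification above, the same calculation applies for general $k$ with $n$ replaced by $m+k$, so that the parity of $m+k$ controls the dichotomy $h_+$ versus $h_-$.

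The main obstacle I anticipate is the coherence check in this last step: one must verify that the sign $(-1)^{n(n+1)/2}$ from Pham's theorem at $n=m+k$, the Galois twist $\zeta_d\mapsto\zeta_d^k$, and the imaginary correction factor combine coherently to give exactly $\pm h_+$ or $\pm h_-$, with the switch between them tied precisely to the parity of $m+k$ rather than to $m$ or $k$ individually.
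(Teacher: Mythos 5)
Your rank computation is exactly the paper's argument: the Euler characteristic of the free resolution obtained by tensoring with $\chi_k$ (and you have in fact silently corrected an indexing slip in the displayed sequence, which must run up to $\ZZ[\zeta_d]\otimes_\ZZ R_{m+1}$ for the alternating sum to give $d^{-1}((d-1)^{m+2}+(-1)^{m+1})$; with the terms as literally printed one would get the formula with $m$ shifted down by one, which fails already for $d=3$, $m=1$, where the answer must be $3$). That part is fine, as is the observation that the first differential is injective precisely when $d\nmid k$.

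The genuine gap is in your treatment of the hermitian form. The specialized pairing is $(a\cdot b)_k=\sum_{g\in\mu_d{}^k}(a\cdot gb)\chi_k(g)$, a sum over a group of order $d^k$; its values on the generators $u^K$ are \emph{not} obtained from the $k=1$ values by applying the automorphism $\zeta_d\mapsto\zeta_d^k$, so the Galois twist of resolutions cannot ``transport'' the form identification from $k=1$ to general $k$. This is not merely the coherence worry you flag at the end; it is structurally impossible, because the statement itself says the form switches between $h_+$ and $h_-$ as the parity of $m+k$ (hence of $k$) changes, and $h_+$ and $h_-$ are not Galois conjugate. Concretely, for $d=3$ and $m=0$ the module has rank one; for $k=1$ a generator has $h_-$-norm $\pm 1$, while for $k=2$ the direct computation from $e'_n\star e'_n=\prod_\nu(1-u_\nu)$ gives a generator of norm $\pm 3=\pm h_+(u^K,u^K)$, and no ring automorphism of $\ZZ[\zeta_3]$ relates these. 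What is actually required --- and what the paper dismisses as ``straightforward'' --- is a direct evaluation, for each $k$, of $\sum_{g\in\mu_d{}^k}(u^Ke'\cdot gu^Le')\chi_k(g)$ starting from Pham's formula, checking that after collapsing $k$ coordinates to $\zeta_d$ one lands on $\pm h_+$ or $\pm h_-$ according to the parity of $m+k$; your proposal defers exactly this step. Two smaller points: your Galois argument covers only $\gcd(k,d)=1$ rather than the stated hypothesis $k\notin\ZZ d$ (harmless for $d=3$ but not for the corollary as stated for composite $d$), and the intermediate differentials are not ``manifestly $k$-independent'' --- they involve $v_{k+\nu}=u_1\cdots u_{k+\nu}$, which specializes to $\zeta_d^k\cdot u_{k+1}\cdots u_{k+\nu}$ --- although your twist does handle them uniformly once this is noticed.
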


The  verification of the type of the hermitian form is straightforward; the rest follows from the preceding
discussion.

\subsection*{Characters and  the Griffiths-De Rham description}
We determine the set of characters  of $R'_{n+1}$. If  $\chi: R'_{n+1}\to \CC$ is a ring homomorphism, then 
the fact that $\sum_{k=0}^{d-1} u_i^k=0$ implies that $\chi (u_i)=\zeta_d ^{k_i}$ for some nonzero $k_i\in \ZZ/d$ and so $\chi (u_0u_1\cdots u_{n+1})=\zeta_d^{(\sum_i k_i)}$. The latter must be $1$, and so we will have 
$\sum_i k_i= 0$ in $\ZZ/d$. Conversely,
any tuple $K=(k_0,\dots ,k_{n+1})\in (\ZZ/d-\{ 0\})^{n+2}$ with zero sum gives via the above prescription a character $\chi^K$ of $R'_{n+1}$.
These characters define a basis of  $\Hom_\ZZ(\Ecal'_{n+1},\CC)$ and yield the eigenline decomposition
of that space under the action of the group $\mu_d{}^{n+2}$. This decomposition must be orthogonal with respect to the
sesquilinear form on $R'_{n+1}$.

Since we have an identification $\Hom_\ZZ(R'_{n+1},\CC)\cong H^n_o(Y_n;\CC)$, we can compare the above character basis with one obtained by means of Griffiths' De Rham description \cite{griffiths} of $H_o^n(Y_n;\CC)$: a basis  is here given by certain  
rational differential forms  indexed by the  tuples $L=(\ell_0,\dots, \ell_{n+1})$ with $\ell_i\in\{ 1,\dots ,d-1\}$ and with the property that $d$ divides $\sum_{i=0}^{n+1}\ell_i$: 
\[
\omega _L :=\Res_{Y_n}\Res_{\PP^{n+1}} \Big(\prod_{i=0}^{n+1}Z_i^{\ell_i-1}\Big)\frac{dZ_0\wedge\cdots \wedge dZ_{n+1}}{F_n^{q+1}},
\]
where $q$ is such that the expression is homogeneous of degree zero, that is, given by $q+1=(\sum_i\ell_i )/d$. Moreover, the Griffiths theory tells us that $\omega_L$ lies in $F^{n-q}H^n_o(Y_n;\CC)$, but not in 
$F^{n-q+1}H^n_o(Y_n;\CC)$. For example, if $d=3$ and $n\equiv 1\pmod{3}$, then 
$\omega_{(1^{n+1})}$ spans the bidegree  $(n-[n/3],[n/3])$-part of $H^4_o(Y_4)$.

It is clear that $\omega_L$ is an eigenform for the $\mu_d{}^{n+1}$-action: $u_i$ acts on it as multiplication by $\zeta^{-\ell_i}$ and so the corresponding character is 
$\chi^{-L}$. Since the $\mu_d{}^{n+1}$-action preserves the Hodge structure, it follows that the character basis is compatible with that structure: a small computation shows that 
if we represent $K$ by an integer sequence in $\{1,\dots ,d-1\}$ and denote by $|K|$ its sum,  then $\chi^K$ has Hodge type $(p,n-p)$, where $p=-1+(n+2+|K|)/d$.

\section{Hypersurfaces with partial Fermat symmetry} 

\subsection*{The notion of a $k$-marking} 
The action of $\Scal_{n+1}\ltimes\mu_d{}^{n+1}$ on $\PP^{n+1}$ as above lifts to $\CC^{n+2}$ by letting
the first coordinate unchanged. For $k\le n+1$ we regard $\Scal_k\ltimes\mu_d{}^k$ as the subgroup of
$\Scal_{n+1}\ltimes\mu_d{}^{n+1}$ that acts on the last $k$ coordinates only and we denote this image of
$\Scal_k\ltimes\mu_d{}^k$ in $\GL(n+2)$ by $G_k$. Recall that $\chi_k: G_k\to \CC^\times$ is the character
that sends each $u_i$ to $\zeta_d$. We also use that notation for its restriction to $\mu^k$.

\begin{lemma}\label{lemma: partialFermat}
Any polynomial in $Z_0,\dots ,Z_{n+1}$ homogeneous degree of $d$ and fixed by  $G_k$ 
is a sum of a degree $d$ polynomial  in $Z_0,\dots ,Z_{n+1-k}$ and a scalar multiple of the Fermat form
$F_k:=Z_{n+2-k}^d+\cdots +Z_{n+1}^d$.
\end{lemma}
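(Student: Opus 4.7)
The plan is to decompose an arbitrary degree $d$ homogeneous polynomial $P$ according to its monomials in the last $k$ variables, and then to apply the two invariance conditions $\mu_d{}^k$ and $\Scal_k$ separately. Write
\[
P(Z) = \sum_{\alpha} P_\alpha(Z_0,\dots ,Z_{n+1-k})\, Z_{n+2-k}^{\alpha_1}\cdots Z_{n+1}^{\alpha_k},
\]
where $\alpha=(\alpha_1,\dots ,\alpha_k)$ ranges over tuples with $0\le |\alpha|\le d$ and $P_\alpha$ is homogeneous of degree $d-|\alpha|$ (so in particular $P_\alpha$ is a scalar when $|\alpha|=d$).

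First I would use invariance under $\mu_d{}^k\subset G_k$. The generator $u_i$ acts on the monomial $Z_{n+2-k}^{\alpha_1}\cdots Z_{n+1}^{\alpha_k}$ by the character $\zeta_d^{\alpha_i}$, and acts trivially on the $P_\alpha$. Since these characters separate the residue classes of $\alpha\in(\ZZ/d)^k$, invariance forces $P_\alpha=0$ unless every $\alpha_i$ is divisible by $d$. Combined with the degree bound $|\alpha|\le d$, the only surviving tuples are $\alpha=0$ and the $k$ coordinate vectors $\alpha=d\cdot \mathbf{e}_i$. Hence
\[
P(Z)=P_0(Z_0,\dots ,Z_{n+1-k})+\sum_{i=1}^{k} c_i\, Z_{n+1-k+i}^d
\]
for some scalars $c_i\in\CC$ and some homogeneous $P_0$ of degree $d$.

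Finally, imposing $\Scal_k$ invariance permutes the $k$ terms $c_i Z_{n+1-k+i}^d$ among themselves while leaving $P_0$ untouched; so all $c_i$ coincide to a common value $c$, giving $P=P_0+c F_k$. This is the claimed decomposition. There is no real obstacle, the only point worth highlighting is the interaction between the degree bound $|\alpha|\le d$ and divisibility, which is what rules out mixed pure-power monomials such as $Z_i^aZ_j^{d-a}$ with $0<a<d$.
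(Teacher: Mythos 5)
Your proof is correct and is exactly the ``easy proof left to the reader'' that the paper omits: character decomposition under $\mu_d{}^k$ forces each exponent $\alpha_i$ in the last $k$ variables to be divisible by $d$, the degree bound $|\alpha|\le d$ then leaves only $\alpha=0$ and the pure $d$-th powers, and $\Scal_k$-invariance equalizes their coefficients. Nothing is missing.
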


The easy proof is left to the reader. We will refer to such a  polynomial as a \emph{partial Fermat form of order $k$}. Since such forms are parameterized by a connected variety, we have:

\begin{lemma}
Let $Y\subset\PP^{n+1}$ be a smooth $n$-fold defined by  partial Fermat form of degree $d$ and of order $k$.
Then there is a $G_k$-equivariant isomorphism $H_n(Y)\cong H_n(Y_n)$ which respects the intersection pairing, and (for even $n$)  the class of an $n/2$-dimensional linear section.
\end{lemma}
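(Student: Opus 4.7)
The plan is to exhibit $Y$ and $Y_n$ as two smooth fibers of a single $G_k$-equivariant family over a connected base, and then transport cohomology along a path in that base.

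First I would use the preceding lemma to identify the space $V$ of partial Fermat forms of order $k$ with a linear subspace (hence an affine space) of $H^0(\PP^{n+1},\Ocal(d))$. Inside $V$, the locus $V^\sm$ of forms defining a smooth hypersurface is the complement of the intersection of $V$ with the discriminant hypersurface, hence a Zariski-open and (since $F_n\in V^\sm$) nonempty subset of an affine space. Such a set is path-connected in the analytic topology, so I can pick a continuous path in $V^\sm$ joining the equation of $Y$ to $F_n$.

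Next I would form the universal family $\pi\colon \Ycal\to V^\sm$ of smooth hypersurfaces. Because every element of $V$ is by definition fixed by $G_k$, the group $G_k$ acts on $\Ycal$ covering the trivial action on $V^\sm$. Ehresmann's theorem makes $\pi$ a $C^\infty$ locally trivial fibration; since $G_k$ is finite, I can average an arbitrary horizontal distribution over its orbits to get a $G_k$-invariant one, and parallel transport along the chosen path then yields a $G_k$-equivariant diffeomorphism $Y\xrightarrow{\sim}Y_n$. Passing to $H_n$ gives the required $G_k$-equivariant isomorphism.

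Preservation of the intersection pairing is automatic, as it depends only on the oriented diffeomorphism type. For even $n$, the class of an $n/2$-dimensional linear section is the restriction to each fiber of $\kappa^{n/2}\in H^n(\PP^{n+1})$; equivalently, it defines a flat section of $R^{n}\pi_*\ZZ$ over $V^\sm$, so it is also preserved by parallel transport. The single genuinely non-formal point is the $G_k$-equivariance of the parallel transport, and this is exactly what the finite-group averaging above is designed to handle; everything else follows at once from smooth local triviality.
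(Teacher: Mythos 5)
Your argument is correct and is essentially the paper's own: the paper justifies this lemma with the single remark that partial Fermat forms are parameterized by a connected variety, i.e.\ exactly the parallel-transport argument you spell out. Your filling-in of the details (smooth locus of the affine space of $G_k$-invariant forms is connected, Ehresmann trivialization made $G_k$-equivariant by averaging the horizontal distribution over the finite group, flatness of $\kappa^{n/2}$ as a section of the local system) is accurate and complete.
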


This suggests the following definition. First, let us agree to call an \emph{abstract hypersurface of degree $d$} a variety with a complete linear system that maps  that variety isomorphically onto a 
hypersurface of degree $d$ (in other words, it comes with an isomorphism onto such a 
hypersurface, given up to projective equivalence).

\begin{definition}\label{def:marking}
Let $Y$ be a smooth abstract hypersurface of degree $d$ and dimension $m$. A \emph{$k$-marking} of $Y$ is given by a pair $(i:Y\hookrightarrow \hat Y,\phi)$, 
where $\hat Y$ is smooth abstract hypersurface of degree $d$ and dimension $m+k$ together with a $G_k$-action of the above type, $i$ is an isomorphism of $Y$ onto the $G_k$-fixed point set in $\hat Y$ (as projective manifolds) and $\phi$ is a $G_k$-equivariant  lattice isomorphism of $H_{m+k+1}(\hat Y)$ onto $H_{m+k+1}(Y_{m+k})$ which, in case $m+k$ is even, takes the $(m+k)/2$-th power of the hyperplane class of $\hat Y$ to that of $Y_{m+k}$. We regard two such markings $(i:Y\hookrightarrow \hat Y,\phi)$ and $(i':Y\hookrightarrow \hat Y',\phi')$ as equal  if there exists a $G_k$-equivariant isomorphism $h:\hat Y\cong\hat Y'$ such that $i'=hi$ and $\phi'g=\phi h^*$ for some $g\in G_k$.
\end{definition}

Notice that a $k$-marking induces a unitary  isomorphism of the following
$\ZZ[\zeta]\otimes _{G_k}H^o_{m+k+1}(\hat Y)$ onto $\ZZ[\zeta]\otimes _{\chi_k}R'_{m+k+1}$.

\begin{remark}
The group $\G$ of transformations of $H_{m+k}(Y_{m+k})$ that respect the intersection product and preserve the relevant class of a linear section is an arithmetic group of orthogonal or symplectic type (depending on the parity of $n$) which contains the image of $G_k$. The $G_k$-centralizer in $\G$,  $\G_k$, is also arithmetic and permutes the $k$-markings of $Y$ transitively. It is easy to see that $G_k$ acts faithfully 
on $R'_{m+k}$. Hence it does so on $H_{m+k}(Y_{m+k})$. So $\mu_d{}^k\subset G_k$ may be regarded a (central) subgroup of  $\G_k$ and the latter acts on the markings via the quotient $\G_k/\mu_d{}^k$.

A period map is defined by assigning to $Y$ the primitive Hodge structure of $\hat Y$ with its $G_k$-action.
A $k$-marking of $Y$ makes this correspond to a $G_k$-invariant Hodge structure on 
$\Hom_{\ZZ}(R'_{n+1},\CC)$. In our examples the most interesting part of that structure  is its  restriction to
the $\bar\chi_k$-character space. That is why we payed special attention to $\ZZ[\zeta_d]\otimes_{\chi_k}R'_{n+1}$.
\end{remark}

\subsection*{Moduli spaces of partial Fermat varieties} 
If $S^d_{n+2}$ is an abbreviation for the $\SL(n+2)$-representation on the space degree $d$ polynomials in the variables $Z_0,\dots ,Z_{n+1}$,  then Lemma \ref{lemma: partialFermat}
asserts that
\[
\big(S^d_{n+2})^{G_k}=S^d_{n+2-k}\oplus\CC.F_k.
\]
Notice that the $G_k$-centralizer of $\SL (n+2)$,
$\SL (n+2)^{G_k}$, can be identified with $\SL (n+2-k)\times\CC^\times$, where the second factor is the space of diagonal matrices of the form $(\lambda I_{n+2-k},\mu I_k)$ with determinant 
$\lambda^{n+2-k}\mu^k =1$. Since this factor affects the coefficient of $F_k$, it follows that we get a map of orbit categorical spaces $\SL (n+2-k)\bss S^d_{n+2-k}\to \SL (n+2)\bss S^d_{n+2}$
whose image is also the image of $\SL (n+2)^{G_k}$ (it takes the origin of $S^d_{n+2-k}$ to the orbit, or rather its closure, of $F_k$). It is the morphism of (affine) quasi-cones defined
by $\CC[S^d_{n+2}]^{\SL(n+2)}\to \CC[S^d_{n+2-k}]^{\SL(n+2-k)}$.

We denote by $St^d_{n+2}\subset sSt^d_{n+2}\subset S^d_{n+2}$ the stable
and the semistable loci. The former contains the locus $S\! m^d_{n+2}$ which parametrizes the smooth $n$-folds.

\begin{proposition}\label{prop:orbitspacesemb}
The morphism of quasi-cones 
\[
\SL (n+2-k)\bss S^d_{n+2-k}\to \SL (n+2)\bss S^d_{n+2}
\] 
is finite  and is a normalization of its image (equivalently,
the image of the embedding $\CC[S^d_{n+2}]^{\SL (n+2)}\to \CC[S^d_{n+2-k}]^{\SL (n+2-k)}$ is an integral extension of $\CC[S^d_{n+2-k}]^{\SL (n+2-k)}$ in the latter's field of fractions). 
It sends stable (resp.\ semistable) orbits to stable (resp.\ semistable) orbits.
\end{proposition}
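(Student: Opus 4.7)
The plan is to factor the morphism through the intermediate GIT quotient attached to the $G_k$-fixed locus, deducing finiteness and the normalization statement from the Luna--Richardson theorem and the (semi)stability claim from the Hilbert--Mumford numerical criterion.

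By Lemma~\ref{lemma: partialFermat} and the preceding computation $\SL(n+2)^{G_k}=\SL(n+2-k)\cdot\CC^\times$, the morphism factors as
\[
\SL(n+2-k)\bss S^d_{n+2-k}\xrightarrow{\alpha}\SL(n+2)^{G_k}\bss(S^d_{n+2-k}\oplus\CC F_k)\xrightarrow{\beta}\SL(n+2)\bss S^d_{n+2},
\]
where $\alpha$ is induced by $f\mapsto(f,F_k)$ and $\beta$ by the inclusion of fixed points. For $\beta$ I would invoke the Luna--Richardson theorem: because $G_k$ is finite (so linearly reductive) and $\SL(n+2)^{G_k}$ has finite index in its normalizer in $\SL(n+2)$ (the quotient embedding into $\mathrm{Out}(G_k)$), the natural map from the fixed-point quotient to the full quotient is finite and birational onto a closed subvariety, namely the stratum of isotropy type $(G_k)$. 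For $\alpha$, on the dense open locus where the $F_k$-coefficient is nonzero the $\CC^\times$-factor of $\SL(n+2)^{G_k}$ can be used to normalise this coefficient to $1$, leaving a residual finite cyclic group $\Gamma\subset\CC^\times$ acting on $S^d_{n+2-k}$ by scalars. An explicit computation using $\lambda^{n+2-k}\mu^k=1$ together with $\mu^d=1$ matches $\Gamma$ against the scalar image of the centre of $\SL(n+2-k)$ acting on degree-$d$ forms, so $\Gamma$-equivalence on a generic $\SL(n+2-k)$-stable form is already encoded by $\SL(n+2-k)$-equivalence; hence $\alpha$ is birational onto its image. Since $\SL(n+2-k)\bss S^d_{n+2-k}$ is normal (being a GIT quotient of a reductive group on a smooth affine variety), combining these two facts shows that $\beta\circ\alpha$ is finite and is the normalization of its image.

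For the (semi)stability claim I would apply the Hilbert--Mumford numerical criterion. A destabilising one-parameter subgroup of $\SL(n+2-k)$ for $f$ extends by taking zero weights on the last $k$ coordinates to a destabilising 1-PS of $\SL(n+2)$ for $f+F_k$, giving one direction with no change between the strict and non-strict cases. Conversely, given a destabilising 1-PS $\lambda$ of $\SL(n+2)$ for $f+F_k$, the $\Scal_k\ltimes\mu_d{}^k$-symmetry of $F_k$ in the last $k$ variables permits one to average the $\lambda$-weights on those coordinates (by conjugation within $\SL(n+2)^{G_k}$) and then to subtract the $\CC^\times$-factor of $\SL(n+2)^{G_k}$ so that those weights all vanish, without increasing the minimum $\lambda$-weight of $f+F_k$. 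The resulting 1-PS lies in $\SL(n+2-k)$ and destabilises $f$, and the strict versus non-strict inequalities match through each step, separating stability from semistability. This last manoeuvre is the observation due to Dolgachev mentioned in the introduction.

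The main technical obstacle is the birationality of $\alpha$: it rests on the arithmetic identification of $\Gamma$ with the centre-of-$\SL(n+2-k)$ scalars on degree-$d$ forms, and one must check that no further scalings appear outside the $\SL(n+2-k)$-equivalence, particularly when $\gcd(d,k)$ shares a factor with $n+2-k$.
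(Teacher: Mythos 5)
Your overall strategy is genuinely different from the paper's, so let me first record what the paper actually does: it observes that the morphism is generically injective with normal source and target, proves the (semi)stability statement for $k=1$ by the convex-hull (``simplex'') form of the numerical criterion --- the Newton polytope of $F+X_{m+1}^d$ is the cone with apex $de_{m+1}$ over that of $F$, and the barycentre of $\Delta_{m+1}$ lies on the segment joining $de_{m+1}$ to the barycentre of $\Delta_m$ --- and then concludes by induction on $k$. Everything else comes for free: once semistable points go to semistable points, the homogeneous map of quasi-cones pulls the vertex back to the vertex and is therefore finite, and finite plus generically injective plus normal source gives ``normalization of the image''. No fixed-point quotient theorem is invoked. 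Your replacement of this economy by Luna-type results buys nothing and creates two problems. Luna--Richardson gives an isomorphism only when the subgroup is the \emph{principal} isotropy group, which $G_k$ is not; the general fixed-point theorem of Luna yields finiteness of $X^{G_k}\bss N\to X\bss G$ onto its closed image but \emph{not} birationality, so your claim that $\beta$ is ``birational onto the stratum of isotropy type $(G_k)$'' is unsupported (it would require computing the generic automorphism group of a partial Fermat hypersurface). And you are right to flag $\alpha$: the residual scalar group $\Gamma$ can be strictly larger than the scalars realized by the centre of $\SL(n+2-k)$. For $d=3$, $n+2-k=3$, $k=1$ the forms $F$ and $\omega F$ ($\omega$ a primitive cube root of unity) give projectively equivalent surfaces $F+W^3=0$ but are not $\SL(3)$-equivalent (the degree-$4$ Aronhold invariant separates them), so there the map is generically $3{:}1$ and birationality genuinely fails; it must be checked case by case, and it does hold for the triples the paper uses ($d=3$, $n+2=6$, $k\le 3$). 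To be fair, the paper's own one-line ``observe that the morphism is generically injective'' glosses over the same point.

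The more serious gap is in the stability argument. The proposition asserts only the implication ``$f$ (semi)stable $\Rightarrow f+F_k$ (semi)stable'', so only your ``conversely'' paragraph is relevant, and its key step is unjustified: a destabilising one-parameter subgroup of $\SL(n+2)$ for $f+F_k$ is a priori diagonalised in an arbitrary basis, so there are no ``$\lambda$-weights on those coordinates'' to average. You must first show that the 1-PS can be conjugated into $\SL(n+2)^{G_k}$, i.e.\ taken compatible with the splitting; this requires Kempf's theory of optimal destabilising subgroups (the optimal 1-PS for an unstable point fixed by a reductive subgroup of the stabiliser may be chosen in the centraliser of that subgroup). With that input your weight manipulation does work and matches strict with strict --- and the same reduction is in fact hidden in the paper's ``the assertion follows easily''. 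Incidentally, your ``first direction'' is both unnecessary for the statement and slightly off: extending a 1-PS by zero weights on the last $k$ coordinates gives the monomials $Z_j^d$ weight exactly $0$, so strict positivity is not preserved by that extension.
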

 \begin{proof}
Observe that the morphism in question is generically injective and that its
 domain and range are normal. We prove that when $k=1$, it sends (semi)stable points to (semi)stable points; the proposition then follows with induction on $k$.
 
For this we invoke the well-known simplex criterion. Assign a monomial of degree $d$ in $m$ variables the vector in $\ZZ^m=\ZZ e_1+\cdots +\ZZ e_m$ whose components are the exponents of the monomial. This makes the monomials in $S^d_m$ correspond to the integral vectors in the simplex $\Delta_m$ spanned by $de_1,\dots ,de_m$. Then an element of $F\in S^d_m$ is semistable resp.\ stable if and only if the convex hull of the integral vectors in $\Delta_m$ representing monomials appearing in $F$ contains the barycenter $\frac{d}{m}\sum_i e_i$ of
 $\Delta_m$ resp. contains the barycenter in its interior. Since the convex set attached to 
 $F+X_{m+1}^d$ is the cone (with vertex $e_{m+1}$) over the convex set attached to $F$, the assertion follows easily. 
\end{proof}

This allows us to regard a projective equivalence class of stable $(n-k)$-folds of degree $d$ as a projective equivalence class of stable $n$-folds of degree $d$ with a $G_k$-action
of the type described above. More precisely, it associates to a  $(n-k)$-fold of degree $d$ an  $n$-fold of degree $d$ that is given up to an action by $G_k.\CC^\times$.

\begin{remark}
By definition a $k$-marked $m$-fold of degree $d$ defines a marked $(m+k)$-fold of that same degree
up to $G_k$-symmetry.
This notion is useful if we happen to have a good understanding of the period map for the latter.
In that case it is desirable to characterize via the period map the locus of $k$-marked $m$-folds in the moduli space of marked $(m+k)$-folds, for instance as the locus where the Hodge structure has a $G_k$-symmetry.
This is particularly so when the period map is a local isomorphism. If we assume $n$ even, then this only  happens for  cubic fourfolds and quartic surfaces but it at least helps us to get hold of the period maps for cubic threefolds, cubic surfaces and quartic curves. The period map for quartic surfaces is essentially that of K3 surfaces of degree four and the derived period map for quartic curves was treated by Kond\=o \cite{kondo} (see also Looijenga \cite{looijenga:quartics} and Artebani \cite{art}). 
Given its share of attention, we shall here focus on the cubic  case.
\end{remark}

\section{A period map for a cubic hypersurface of dimension $\le 4$}

We now take $n=4$, $d=3$. So $G_k$ stands for the subgroup $\Scal_{k}\ltimes\mu_3{}^{k}\subset \GL (6)$ acting on the last $k$ coordinates. We write $\Ecal$ for $R_1=\ZZ[u]/(u^2+u+1)$ and sometimes
refer to its elements as \emph{Eisenstein integers} (it is the ring of integers in the corresponding quadratic extension of $\QQ$). We  also write $\Lambda$ for the lattice $H_4(Y_4)$ with its action of
$G_5$.

The lattice $\Lambda$ is unimodular, odd and of signature $(21,2)$. 
The square of the hyperplane class, denoted $\eta\in \Lambda$, has the property that $\eta\cdot\eta=3$ and that its orthogonal complement, denoted $\Lambda_o\subset \Lambda$, is even (the self-product of every vector is even). Moreover, $\ZZ\eta$ is the fixed point sublattice of the  action of $G_5$.
We put $V:=\Hom(\Lambda_o,\CC)$ and notice that $V$ has an underlying $\QQ$-structure with inner product.
The group of lattice automorphisms of $\Lambda$ that fix $\eta$ is denoted by $\G$.

\subsection*{The period domains} 
If $Y\subset \PP^5$ is any smooth cubic hypersurface, then the Hodge numbers in the middle dimension are
$h^{3,1}(Y)=h^{1,3}(Y)=1$ and $h^{2,2}(Y)=21$. The Griffiths theory shows that a generator of $H^{3,1}(Y)$ is determined by an equation $F\in\CC[Z_0,\dots. Z_5]$ for $Y$, namely
\[
\Res_Y\Res_{\PP^5} \frac{dZ_0\wedge\cdots \wedge dZ_5}{F(Z_0,\dots ,Z_5)^2}.
\]
We denote this class $\omega (F)\in H^4(Y;\CC)$. It satisfies the familiar equality $\omega(F)\cdot \omega(F) =0$ and the inequality $\omega(F)\cdot\overline{\omega}(F) <0$. 

If $Y=Y_4$ and take for $F$ its Fermat equation $F_6$, then
we see that $\omega (F_6)$ transforms under $G_5$ as its numerator 
$dZ_0\wedge\cdots \wedge dZ_5$: it is invariant under $\Scal_5$ and $u_i$ acts as multiplication by $\bar\zeta_3$.
We denote this character $\chi$ and its restriction to $G_k$ by $\chi_k$. We further denote by $V_k$ the 
$\chi_k$-eigenspace in $V$.

\begin{lemma}\label{lemma:signatures}
The $\chi_k$-eigenspace $V_k$ is for $k=1,2,3$
a subspace on which the Hermitian form defined by 
$h(\omega_1,\omega_2):=\omega_1\cdot\bar\omega_2$ is nondegenerate and of hyperbolic signature
$(10,1)$, $(4,1)$, $(1,1)$ respectively. 
\end{lemma}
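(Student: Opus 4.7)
My plan is to combine the Griffiths--De Rham character description of $V = H^4_o(Y_4;\CC)$ established at the end of Section 2 with Riemann's bilinear relations. First I would note that the primitive Hodge numbers of the cubic fourfold $Y_4$ are $(h^{3,1}, h^{2,2}_o, h^{1,3}) = (1, 20, 1)$, so $V$ has total dimension $22$ and its Hermitian form $h$ has signature $(20,2)$. Since $G_k$ preserves the Hodge filtration, $V_k$ inherits a decomposition $V_k = V_k^{3,1}\oplus V_k^{2,2}\oplus V_k^{1,3}$. Riemann's bilinear relations (with the sign $(\sqrt{-1})^{p-q}$) imply that $h$ is negative definite on $H^{3,1}\oplus H^{1,3}$ and positive definite on $H^{2,2}_o$; so the signature of $h|_{V_k}$ will be $(\dim V_k^{2,2},\ \dim V_k^{3,1}+\dim V_k^{1,3})$, and the whole task reduces to computing the Hodge decomposition of each $V_k$.

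Next I would work in the Griffiths eigenbasis $\{\omega_L\}$ of $V$, indexed by tuples $L=(\ell_0,\dots,\ell_5)$ with $\ell_i\in\{1,2\}$ and $\sum\ell_i\equiv 0\pmod 3$. From the discussion at the end of Section 2, $\omega_L$ lies in $H^{p,4-p}$ with $p=5-|L|/3$ and transforms under $u_i\in\mu_3^5$ as multiplication by $\zeta_3^{-\ell_i}$; under a permutation of the coordinates it picks up the sign of that permutation, from the antisymmetry of $dZ_0\wedge\cdots\wedge dZ_5$. In particular $\omega(F_6)=\omega_{(1,\dots,1)}$ lies in $H^{3,1}$ and realizes the character $\chi$. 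Hence the $\chi_k$-eigenspace $V_k$ is spanned by those $\omega_L$ with $\ell_i=1$ for the last $k$ coordinates (the $\Scal_k$-component of $\chi_k$ being the sign character, which the $\omega_L$ automatically carry).

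It then remains to enumerate such tuples for each $k$, split by Hodge type. For $k=1$ the constraint $\ell_5=1$ leaves one tuple of type $(3,1)$ (all $\ell_i=1$), $\binom{5}{3}=10$ of type $(2,2)$ (three $2$'s among $\ell_0,\dots,\ell_4$), and none of type $(1,3)$ (which would require $|L|=12$, unreachable). For $k=2$ we get one tuple of type $(3,1)$ and $\binom{4}{3}=4$ of type $(2,2)$. For $k=3$ only $L=(1,1,1,1,1,1)$ (type $(3,1)$) and $L=(2,2,2,1,1,1)$ (type $(2,2)$) survive. In every case $V_k^{3,1}=\CC\omega(F_6)$, $V_k^{1,3}=0$, and the remaining $10$, $4$, $1$ basis vectors lie in $H^{2,2}_o$; combined with the signs of $h$ on each Hodge piece this yields the signatures $(10,1)$, $(4,1)$, $(1,1)$.

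The one delicate point will be the $k=3$ count: the rank formula of the corollary in Section 2 explicitly excludes $d\mid k$, so it cannot be invoked, and the enumeration has to be done directly. This is also exactly the case where the vanishing $V_k^{1,3}=0$ stops being automatic, since $|L|=12$ just barely fails to be achievable once three of the $\ell_i$ are pinned to $1$. Once these small combinatorial checks are in place, the rest of the argument is bookkeeping in the Griffiths--Hodge dictionary.
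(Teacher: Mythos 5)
Your argument is correct, but it follows a different route from the paper's. The paper first observes that a $\chi$-eigenvector of the \emph{full} group $G_5$ in $\Hom(R'_5,\CC)$ is forced, by the ring structure of $R'_5$, to be a multiple of the ring homomorphism $u_i\mapsto\zeta_3$, so that the $\chi$- and $\bar\chi$-eigenspaces of $G_5$ are the lines spanned by $\omega(F_6)$ and $\bar\omega(F_6)$; since these two lines span \emph{the} maximal negative definite subspace of $(V,h)$ and lie in distinct $G_k$-character spaces (which are mutually $h$-orthogonal), each of $V_k$ and $V_{\bar\chi_k}$ receives exactly one negative direction, giving hyperbolicity at once without ever decomposing $V_k$ by Hodge type; only the dimension of $V_k$ remains, which the paper declares ``straightforward.'' You instead compute the full Hodge decomposition of each $V_k$ by enumerating the Griffiths forms $\omega_L$ with $\ell_i=1$ in the last $k$ slots and apply the Hodge--Riemann relations piecewise. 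Your counts ($1,10,0$), ($1,4,0$), ($1,1,0$) are right, and you are correct that the rank formula of the corollary in Section 2 is unavailable for $k=3$ (it would give $3$ rather than $2$), so the direct enumeration is genuinely needed there. What your approach buys is that it simultaneously supplies the omitted dimension count and makes explicit that $V_k^{1,3}=0$, which is exactly why $\PP\Omega_k$ is a complex ball rather than a type~IV domain; what it costs is the extra bookkeeping with Hodge types, which the paper's maximal-negative-subspace argument sidesteps. One small point to reconcile with the text: you assign the sign character to the $\Scal_k$-action on the $\omega_L$, whereas the paper asserts $\Scal_5$-invariance of $\omega(F_6)$; since every $\omega_L$ entering your span of $V_k$ has its last $k$ exponents equal and hence transforms under $\Scal_k$ exactly as $\omega(F_6)$ does, this convention issue does not affect your identification of $V_k$, but it is worth a sentence to say so.
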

\begin{proof}
It is clear that any linear map $f:H^o_4(Y_4)\cong \Ecal'_5\to \CC$ which transforms according to the character $\chi$ is $f(1)$ times the ring homomorphism defined by $u_i\mapsto\zeta_3$. 
So  the $\chi$-eigenspace of $G_5$ in $V=H_o^4(Y_4;\CC)$ is one-dimensional (and hence spanned by $\omega (F_6)$). The   $\overline{\chi}$-eigenspace is then spanned by $\bar\omega (F_6)$. We know from Hodge theory that  $\omega (F_6)$ and $\bar \omega (F_6)$ span a maximal negative definite subspace of $V$ for $h$. This implies that for $k\le 3$, $V_k$ is is nondegenerate and of hyperbolic signature. It remains
to compute the dimension of $V_k$. This is straightforward.
\end{proof}

We introduce the period domain
\[
\Omega:=\{ \omega\in V\, :\, \omega\cdot \omega =0, \omega\cdot\overline{\omega}<0\}
\]
and put $\Omega_k:=\Omega\cap V_k$. The following lemma is a formal consequence of  Lemma \ref{lemma:signatures}
and we therefore omit its proof. 

\begin{lemma}
The projectivization $\PP\Omega\subset \PP(V)$ has two connected components that are interchanged by complex conjugation; each of these is a 
$20$-dimensional symmetric domain for the orthogonal group $\Orth(\Lambda_o\otimes\RR)$. The group $\G$ is
arithmetic in $\Orth (\Lambda_o\otimes\RR)$ so that $\G\bs\Omega$ has the  structure of a Shimura variety.

For  $k=1$, $2$, $3$, $\PP\Omega_k$ is a complex ball of dimension $10$, $4$, $1$ respectively and a  symmetric domain for the special hyperbolic unitary group $\SU(V_k)$.
The centralizer of $G_k$ in $\G$ (which we shall denote by $\G_k$) is arithmetic in the centralizer of $\Scal_{k}\ltimes\mu_3{}^{k}$ in $\Orth (\Lambda_o\otimes\RR)$ and 
acts as such (hence properly discontinuously) in  $\PP\Omega_k$ so that
$\G_k\bs  \PP\Omega_k$ has the  structure of a Shimura variety.
\end{lemma}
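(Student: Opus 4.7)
My plan is to recognize $\Omega$ and each $\Omega_k$ as classical period domains and then derive the arithmeticity and quotient statements via Baily--Borel. First I note that, since $\Lambda$ has signature $(21,2)$ and $\eta\cdot\eta=3>0$, the orthogonal complement $\Lambda_o$ has signature $(20,2)$; thus $V_\RR:=\Lambda_o\otimes\RR$ carries a nondegenerate symmetric form of that signature. Writing $\omega=x+iy$ with $x,y\in V_\RR$, the conditions $\omega\cdot\omega=0$ and $\omega\cdot\bar\omega<0$ translate to $x\cdot y=0$ and $x\cdot x=y\cdot y<0$, so $\PP\Omega$ is in bijection with the space of oriented negative-definite real $2$-planes in $V_\RR$, the orientation being recorded by the ordered basis $(x,y)$ and being well-defined because the rescaling $\omega\mapsto e^{i\theta}\omega$ merely rotates $(x,y)$. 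Complex conjugation sends $(x,y)$ to $(x,-y)$ and so reverses the orientation, interchanging the two components; each component is the classical Type IV bounded symmetric domain of complex dimension $20$ for $\SO^+(V_\RR)$.

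For $\PP\Omega_k$ the key point is that the equation $\omega\cdot\omega=0$ is automatic on $V_k$. Indeed, for any $u\in\mu_3{}^k\subset G_k$ the $G_k$-invariance of the bilinear form gives $\omega\cdot\omega=(u\omega)\cdot(u\omega)=\chi_k(u)^2\,\omega\cdot\omega$, and since $\chi_k(u)$ is a nontrivial cube root of unity, $\chi_k(u)^2\ne 1$. Therefore $\Omega_k=\{\omega\in V_k : h(\omega,\omega)<0\}$, where by Lemma \ref{lemma:signatures} the hermitian form $h$ has signature $(n_k,1)$ with $n_k\in\{10,4,1\}$, and $\PP\Omega_k$ is simply the standard ball model of the rank-one hermitian symmetric domain for $\SU(V_k)\cong\SU(n_k,1)$, of complex dimension $n_k$.

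For arithmeticity, since every element of $\G$ fixes $\eta$ and the sublattice $\ZZ\eta\oplus\Lambda_o$ has finite index in $\Lambda$, restriction to $\Lambda_o$ gives a map $\G\to\Orth(\Lambda_o)$ with finite kernel and finite-index image; hence $\G$ is commensurable with $\Orth(\Lambda_o)$ and therefore arithmetic in $\Orth(V_\RR)$. The Baily--Borel theorem then equips $\G\bs\Omega$ with the structure of a quasi-projective (Shimura) variety. For $\G_k=C_\G(G_k)$ the finite subgroup $G_k\subset \Orth(\Lambda_o)$ is $\ZZ$-defined, so its centralizer in $\Orth(V_\RR)$ is a $\QQ$-algebraic subgroup; $\G_k$ is the intersection of the arithmetic group $\G$ with this centralizer and is therefore arithmetic in it. Restriction to the $\chi_k$-eigenspace realises $\G_k$ (modulo its central subgroup $\mu_3{}^k$) as an arithmetic subgroup of $\U(V_k)\cong\U(n_k,1)$, which then acts properly discontinuously on $\PP\Omega_k$; Baily--Borel again supplies the Shimura structure on $\G_k\bs\PP\Omega_k$.

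The main obstacle, modest as it is, is verifying that $\G_k$ is arithmetic in the real centralizer of $G_k$ in $\Orth(V_\RR)$: one must check that the centralizer of a finite $\QQ$-defined subgroup in a reductive $\QQ$-algebraic group is itself a $\QQ$-algebraic subgroup, and that intersection with the ambient arithmetic lattice remains arithmetic in it. Both facts are standard, but they are what I would spell out carefully; the remainder is a routine unwinding of the definitions.
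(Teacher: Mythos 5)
Your argument is correct and supplies exactly the routine verification that the paper omits (the paper states only that the lemma is a formal consequence of Lemma \ref{lemma:signatures}); in particular your observation that $\omega\cdot\omega=0$ is automatic on $V_k$ because $\chi_k(u)^2\neq 1$ is the right way to see that $\PP\Omega_k$ is the full ball $\{h<0\}$ in $\PP(V_k)$. The one step I would make explicit is why the image of $\G_k$ in $\U(V_k)$ is discrete (hence arithmetic): the proof of Lemma \ref{lemma:signatures} shows that the negative-definite plane for $h$ is spanned by $\omega(F_6)$ and $\bar\omega(F_6)$, which lie in the $\chi_k$- and $\bar\chi_k$-eigenspaces, so every other character eigenspace is positive definite and the corresponding factors of the real centralizer of $G_k$ are compact, whence projection onto the $\U(V_k)$-factor preserves discreteness and finite covolume.
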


\subsection*{The definition of the period map} 
We first characterize a partial Fermat cubic's symmetry  in cohomological terms.

\begin{proposition}\label{prop:marking}
Let $Y$ be a cubic fourfold endowed with an action of $\Scal_k\ltimes\mu_3{}^k$ such that the fixed point set of 
$u_1\in\mu_3{}^k\subset \Scal_k\ltimes\mu_3{}^k$ in  $H_o^4(Z)$ is trivial. Then $Y$ with its $\Scal_k\ltimes\mu_3{}^k$-action is projectively equivalent to a partial Fermat cubic of order $k$ (with $\Scal_k\ltimes\mu_3{}^k$ acting as $G_k$).
\end{proposition}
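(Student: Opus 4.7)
The plan is to analyze a single generator $u_1\in\mu_3{}^k$ of the given group action on $Y$ and show that, in suitable coordinates on $\PP^5$, its linear lift to $\CC^6$ has eigenvalues $(1,1,1,1,1,\zeta_3)$; the full $G_k$-symmetry together with commutativity then lines up all the $u_i$'s and forces the defining cubic into partial Fermat form via Lemma~\ref{lemma: partialFermat}. First, I would lift the projective $G_k$-action to $\CC^6=H^0(\PP^5,\Ocal(1))$; since $\mu_3{}^k$ is abelian, its generators admit commuting order-$3$ lifts, and the $\Scal_k$-action then extends compatibly. The defining cubic $F$ is well-defined up to scalar, and each group element acts on $F$ by a character.

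The heart of the proof is an eigenvalue classification for $u_1$. Let $(n_0,n_1,n_2)$ be the multiplicities of $1,\zeta_3,\bar\zeta_3$ on $\CC^6$ and let $b\in\ZZ/3$ be the weight of $F$ under $u_1$, so $u_1\cdot F=\zeta_3^b F$. Two independent constraints restrict this data. First, since $u_1$ has no fixed vectors on $H_o^4(Y;\QQ)\cong\QQ^{22}$, its $\zeta_3$- and $\bar\zeta_3$-eigenspaces on $H_o^4(Y;\CC)$ both have dimension $11$ (the rational irreducibles of $\mu_3$ being the trivial and the cyclotomic one), so $\Tr(u_1\mid H_o^4)=-11$; together with the trivial action on the hyperplane classes, the topological Lefschetz fixed-point formula gives $\chi(Y^{u_1})=1+1+(1-11)+1+1=-6$. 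Second, a direct Griffiths-residue computation shows that $u_1$ acts on $\omega(F)\in H_o^{3,1}(Y)$ by $\zeta_3^{n_1-n_2+b}$, so the hypothesis gives $n_1-n_2+b\not\equiv0\pmod{3}$.

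I would then enumerate the partitions $n_0+n_1+n_2=6$ and eliminate all but $\{5,1,0\}$. For any partition, $Y^{u_1}=\bigsqcup_j(Y\cap\PP(V_j))$, where $V_j$ is the $\zeta_3^j$-eigenspace, and the restrictions $F|_{V_j}$ are determined by selecting the monomials of $F$ of weight $b$; smoothness of $Y$ together with the shape of each $F|_{V_j}$ controls the geometry of the strata. The excluded partitions split into three types: those where $n_1-n_2+b\equiv0$ forces $\omega(F)$ to be $u_1$-invariant (e.g.\ $\{4,1,1\}$ and $\{2,2,2\}$ with $b=0$); those where smoothness of $Y$ fails because $Y$ would contain a linear subspace of dimension $\geq 3$ (impossible for a smooth cubic fourfold, via the normal-bundle sequence $0\to N_{\PP^3/Y}\to\Ocal_{\PP^3}(1)^{\oplus 2}\to\Ocal_{\PP^3}(3)\to 0$, whose surjectivity would require two quadrics in $\PP^3$ to share no common zero); and those producing smooth $Y$ with $\chi(Y^{u_1})\in\{3,6,12\}\neq-6$ (namely $\{4,2,0\}$, $\{3,2,1\}$ with $b=0$, and $\{2,2,2\}$ with $b\neq0$). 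This case-by-case elimination is the main technical obstacle.

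With $u_1$ shown to be of type $(5,1,0)$, a short smoothness check rules out $b\neq0$ (in either case $F$ is divisible by a coordinate and $Y$ is reducible), so $b=0$ and $F=F'(Z_0,\dots,Z_4)+cZ_5^3$ with $c\neq0$. Applying this conclusion to every $u_i$ in turn, the commuting elements $u_1,\dots,u_k$ are simultaneously diagonalizable with pairwise distinct $\zeta_3$-eigenlines $L_1,\dots,L_k$, lying inside the common $(6-k)$-dimensional trivial eigenspace $V_0:=\bigcap_i\ker(u_i-1)$; and $\Scal_k$ permutes the $L_i$'s transitively. Choosing a $\Scal_k$-adapted basis of $V_0\oplus L_1\oplus\cdots\oplus L_k$ realises the $G_k$-action in the standard form, whereupon Lemma~\ref{lemma: partialFermat}, together with smoothness (which forces the Fermat coefficient to be nonzero), gives $F=F'(Z_0,\dots,Z_{5-k})+cF_k$, identifying $Y$ projectively with a partial Fermat cubic of order $k$ with its $G_k$-action.
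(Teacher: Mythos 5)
Your proposal is correct and rests on the same key mechanism as the paper's proof: the hypothesis forces $\Tr(u_1\mid H^4_o)=-11$, the Lefschetz fixed-point formula then gives $\chi(Y^{u_1})=-6$, and this negativity is what forces $u_1$ to fix a hyperplane pointwise; the passage from "each $u_i$ is of type $(5,1)$" to partial Fermat form via simultaneous diagonalization, the $\Scal_k$-action on the eigenlines, and Lemma \ref{lemma: partialFermat} is also the paper's route. Where you diverge is in how the negativity of $\chi(Y^{u_1})$ is exploited: you propose a case-by-case enumeration of the eigenvalue multiplicities $(n_0,n_1,n_2)$ together with the weight $b$ of $F$, supplemented by a Griffiths-residue constraint on $\omega(F)$ and a no-$\PP^3$-in-a-smooth-cubic-fourfold lemma, and you flag this enumeration as the main technical obstacle. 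The paper disposes of it uniformly in one line: every piece $Y\cap\PP(V_j)$ of the fixed locus is a \emph{smooth} linear section of $Y$ (being a union of components of the fixed locus of a finite-order automorphism), so if every eigenspace has projective dimension $\le 3$ these pieces are finite sets, lines or planes, smooth cubic curves, or smooth cubic surfaces, all of nonnegative Euler characteristic, contradicting $\chi(Y^{u_1})=-6<0$. This makes your auxiliary constraints (the $n_1-n_2+b$ condition and the normal-bundle argument) unnecessary, since the Euler characteristic alone kills every case except a $5$-dimensional eigenspace; your enumeration would go through, but it is strictly more work. One small slip in your last paragraph: the eigenlines $L_i$ do not lie \emph{inside} $V_0=\bigcap_i\ker(u_i-1)$ but are complementary to it, giving $\CC^6=V_0\oplus L_1\oplus\cdots\oplus L_k$ with $\dim V_0=6-k$; this is clearly what you intend, and the rest of the assembly (including the smoothness check that forces $b=0$ and a nonzero Fermat coefficient, which the paper leaves implicit) is fine.
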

\begin{proof}
Since $\pic(Y)$ is generated by the hyperplane class $\kappa_Y$,  $\Scal_k\ltimes\mu_3{}^k$ acts projectively.
We first show that $u_1$ fixes a hyperplane $H_1\subset\PP^5$ pointwise. To this end we compute its Lefschetz number.
Our assumption implies that the characteristic polynomial of $u_1$ in $H_o^4(Y;\CC)$ is $11(T^2+T+1)$ 
so that its trace is $-11$. Since the powers $1,\kappa_Z,\dots ,\kappa_Z^4$ of the hyperplane class are fixed by $u_i$, it follows that the Lefschetz number of $u_i$  equals $-11+5=-6<0$.  So by  Lefschetz theorem, the Euler characteristic of the fixed point set of $u_i$ is negative. An eigenspace of $u_1$ in $\PP^5$ of
dimension $d$ meets $Y$ in linear section $Y_1$ of dimension $\ge d-1$.  
If $d\le 3$, $Y_1$ is a finite set, a line, a cubic curve or a cubic surface. Since it is also smooth, it follows that $Y_1$ has positive Euler characteristic. Hence $u_1$ has an eigen space $H_1$ of dimension $4$. 
Since $u_i$ is conjugate to $u_1$ in $\Scal_k\ltimes\mu_3{}^k$, it fixes a  hyperplane $H_i\subset\PP^5$ pointwise.

The $u_i$ pairwise commute and act differently in $H^4(Y)$, and so the hyperplanes $H_i$ are linearly independent. They are permuted by $\Scal_k$ via its tautological action on the index set.
It then easily follows that we can find coordinates $[W_0:\cdots :W_5]$ for $\PP^5$ such that $\Scal_k$ acts as permutation group of the last $k$ coordinates (here $W_j$ has the index $6-j$)
and $u_1$ acts as $u_1[W_0:\cdots :W_5]=[W_0:\cdots :W_4:\zeta_3W_5]$. 
\end{proof}

It follows from Proposition \ref{prop:marking} that the $h$  in the definition of a $k$-marking of a cubic 
$(4-k)$-fold will be unique up to
a composition with an element of $G_k$.  We also observe that these markings are permuted simply transitively by $\G_k/G_k$. The pairs $(F,\phi)$ with $F\in Sm^3_{6-k}$ and $\phi$ a marking of the associated smooth cubic  
define an unramified $\G_k/G_k$-covering $\widetilde{Sm}{}^3_{6-k}\to Sm^3_{6-k}$.

The projective automorphism group of a smooth cubic  $4$-fold acts faithfully on its set of markings and 
so the marked smooth cubic  $(4-k)$-folds in $\PP^{5-k}$. The scalars $\CC^\times\subset\GL(6-k)$ act  on the
first component via their third power. Hence the action of $\GL(6-k)$ on $Sm^3_{6-k}$ lift to one on
$\widetilde{Sm}{}^3_{6-k}$ and the latter factors through a free action of $\GL(6-k)/\mu_3$.

If $Y$ is given in $\PP^5$ by an equation $F$, then a marking $\phi$ of $Y$ defines an element $\Omega$, so that we have a map
\[
\widetilde{\Psi}: \SL(6-k)\bs \widetilde{Sm}{}^3_{6-k}\to G_k\bs \Omega_k.
\]
Here is $\Omega_0=\Omega$ and $G_0=1$, whereas for $k>0$,  $G_k$ acts on $ \Omega_k$ via $\chi_k$.
Without the marking we only get an element of $\G\bs \Omega$: we have a map
\[
\Psi_k : \SL(6-k)\bs Sm^3_{6-k}\to \G_k\bs \Omega_k.
\]
Domain and range of $\Psi_k$ come with a $\CC^\times$-action: we let $\lambda\in \CC^\times$ act on 
$P_{6-k}$  by scalar multiplication and on $V$ by inverse scalar multiplication (which by duality corresponds to
ordinary scalar multiplication on the \emph{homology} $H_4(Y_o;\CC)$). These determine proper
$\CC^\times$-actions in the orbit space $\SL(6-k)\bs \widetilde{Sm}{}^3_{6-k}$ and in $\G_k\bs \Omega_k$.
These actions are not always faithful, so let us refer to the action of the image groups 
(which are copies of $\CC^\times$) as the \emph{reduced $\CC^\times$-action}.

\begin{proposition}\label{prop:equiv}
The period map $\Psi_k$ is equivariant for the reduced $\CC^\times$-actions on domain and range.
\end{proposition}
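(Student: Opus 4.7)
The plan is to compute directly using the Griffiths--De Rham residue formula
\[
\omega(G) = \Res_{Y_G}\Res_{\PP^5} \frac{dZ_0\wedge\cdots\wedge dZ_5}{G^2},\qquad G\in S^3_6,
\]
which represents a generator of $H^{3,1}(Y_G)$ and is homogeneous of degree $-2$, i.e.\ $\omega(cG)=c^{-2}\omega(G)$. For a $k$-marked cubic $(4-k)$-fold with equation $F$, the associated cubic fourfold has equation $G=F+F_k$, so the domain action $F\mapsto \lambda F$ produces the new fourfold equation $\lambda F+F_k$; I will express $\omega(\lambda F+F_k)$ in terms of $\omega(F+F_k)$ modulo $\G_k$.

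The crucial step is to realize the passage from $F+F_k$ to $\lambda F+F_k$ by a $G_k$-equivariant coordinate change. Fix a cube root $\lambda^{1/3}$ and let $D\in \GL(6)$ be the diagonal matrix scaling $Z_0,\dots,Z_{5-k}$ by $\lambda^{1/3}$ and fixing $Z_{6-k},\dots,Z_5$. Then $D^\ast F=\lambda F$ and $D^\ast F_k=F_k$, so $D$ restricts to a projective isomorphism $Y_{\lambda F+F_k}\to Y_{F+F_k}$ with Jacobian $\lambda^{(6-k)/3}$. Pulling back the Griffiths form yields
\[
\omega(\lambda F+F_k)=\lambda^{-(6-k)/3}\, D^\ast\omega(F+F_k).
\]
Because $D$ is a scalar on the block of coordinates where $G_k$ acts trivially and the identity on the Fermat block where $G_k$ acts, $D$ centralizes $G_k$; the induced lattice automorphism $D^\ast$ therefore lies in $\G_k$. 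Absorbing this into the $\G_k$-orbit gives, modulo $\G_k$,
\[
\omega(\lambda F+F_k)\equiv \lambda^{-(6-k)/3}\,\omega(F+F_k).
\]
Setting $\mu=\lambda^{(6-k)/3}$, this reads $\Psi_k(\lambda F)=\mu^{-1}\Psi_k(F)$, the asserted equivariance for the $\CC^\times$-action $\omega\mapsto \mu^{-1}\omega$ on the range.

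The hard part will be bookkeeping the cube-root ambiguity so that the rational exponent $\lambda^{(6-k)/3}$ makes sense on the reduced $\CC^\times$-actions. Two choices of $\lambda^{1/3}$ differ by an element of $\mu_3$, altering $D^\ast$ by a $\mu_3$-scalar on the non-Fermat block; one verifies that this scalar agrees with the action of $\mu_3\subset G_k$ on the $\chi_k$-eigenspace $V_k$ (and so lies in $\G_k$), whence the ambiguity disappears in the orbit space. A parallel check will show that the kernel of the $\CC^\times$-action on the domain (from scalars in $\SL(6-k)$ via cubing) maps into the corresponding kernel on the range under the exponent $(6-k)/3$, so that $\lambda\mapsto\lambda^{(6-k)/3}$ descends to a well-defined isomorphism between the reduced $\CC^\times$-actions on domain and range.
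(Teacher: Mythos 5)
Your rescaling computation is in substance the paper's own argument: the paper applies the diagonal element $(\lambda^{-1}I_{6-k},I_k)\in\GL(6)$ to get $\Psi_k(\lambda^3F)=\lambda^{6-k}\Psi_k(F)$, which is your identity $\omega(\lambda F+F_k)\equiv\lambda^{-(6-k)/3}\omega(F+F_k)$ with the cube root made explicit by substituting $\lambda\mapsto\lambda^3$ (the same computation reappears in the proof of Corollary \ref{cor:sfperiod}). The one caveat is that what you defer as ``the hard part'' and ``a parallel check'' is precisely where the paper's proof does its remaining work, and you should carry it out rather than postpone it: for $k=1,2$ one notes that the center $\mu_{6-k}$ of $\SL(6-k)$ acts on $S^3_{6-k}$ through $\zeta\mapsto\zeta^{-3}$, which is surjective onto $\mu_{6-k}$ exactly because $\gcd(3,6-k)=1$, while the scalars in $\G_k$ form a $\mu_3$ (the image of $\mu_3{}^k\subset G_k$ acting on $V_k$ through $\chi_k$), and these two facts together are what make $\lambda\mapsto\lambda^{(6-k)/3}$ descend to the reduced groups; for $k=0$ the relevant point is different in flavor (the exponent $2$ is integral, $\SL(6)\cap\CC^\times=\mu_6$ acts on $S^3_6$ through $\pm1$, and $-1\notin\G$), and your proposal does not address this case at all. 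A small imprecision: $D^*$ is an isometry between the cohomologies of two \emph{different} fourfolds, not literally an element of $\G_k$; it only becomes one after composing with the markings on either side, which is harmless for the map into $\G_k\bs\Omega_k$ but worth saying.
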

\begin{proof}
Let us do the case $k=0$ first.
If $\lambda\in\CC^\times$ and $G\in Sm^3_6$, then $\Psi (\lambda G)=\lambda^2\Psi (G)$, where we bear in mind
that $\CC^\times$ acts on $\Omega$ by inverse scalar multiplication. So $\Psi$ has degree $2$.
But $\SL(6)$ meets the scalars in the $6$th roots  of unity and these act in $S^3_6$ through $\pm 1$. On the other hand $\G$ does not contain $-1$, for that would mean that minus the orthogonal reflection in $\eta$ preserves $\Lambda$ and this is clearly not the case. So $\Psi$ is equivariant for the reduced $\CC^\times$-actions. 

Let us now do the cases $k=1,2,3$. If $F\in Sm^3_{6-k}$ and $F_k=Z_{7-k}^3+\cdots +Z_6^3$, then
$(\lambda^{-1} I_{6-k},I_k)\in \GL (6)$ takes the cubic defined by $F+F_k$ to the cubic defined by
$\lambda^3F+F_k$ and this isomorphism takes $\omega (F+F_k)$ to  
$\lambda^{6-k}\omega (\lambda^3F+F_k)$. So $\Psi_k(\lambda^3F)=\lambda^{6-k}\Psi_k(F)$. This shows
that $\Psi_k$ is in an algebraic sense homogeneous of degree $(6-k)/3$. However, the center of $\SL (6-k)$,
which consists of the $(6-k)$th roots of unity, acts on $S^3_{6-k}$ through 
$\zeta\in\mu_3{}^{6-k}\mapsto \zeta^{-3}\in\mu_3{}^{6-k}$ (for $S^3_{6-k}$ is the third symmetric power of the dual of the tautological representation of $\SL (6-k)$). This means that when $3$ is relatively prime to $6-k$, i.e., when $k=1,2$, we have a factorization through $\mu_3{}^{6-k}\bs S^3_{6-k}$. On the other hand, the center of $\G_k$ is $\mu_3$. It follows that in all these cases, $\Psi_k$ is equivariant for the reduced $\CC^\times$-actions.  
\end{proof}

The more customary  form of the period  map for cubic $4$-folds is the projectivization of $\Psi$,
\[
\PP\Psi : \SL (6)\bs \PP Sm^3_6\to \G\bs \PP\Omega.
\]
A deep theorem of Claire Voisin  asserts that this map is an open embedding (another proof is given in \cite{looijenga:4f}). The openness, which amounts to a local Torelli theorem, is easy and was known before; the hard part is the injectivity. The last property is more directly stated as follows:

\begin{theorem}[Voisin \cite{voisin}] 
Let $Y$ and $Y'$ be smooth cubic $4$-folds. Then any lattice isomorphism $H^4(Y')\cong H^4(Y)$ that sends hyperplane class to hyperplane class and respects the Hodge decomposition (briefly, a \emph{Hodge isometry}),
is induced by a unique projective  isomorphism $Y\cong Y'$.
\end{theorem}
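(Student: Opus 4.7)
The approach I would take follows Voisin's original strategy through the Fano variety of lines. For a smooth cubic fourfold $Y\subset\PP^5$, I would bring in $F(Y)\subset\Gr(2,6)$, the variety of projective lines contained in $Y$. By a theorem of Beauville--Donagi, $F(Y)$ is a smooth projective irreducible holomorphic symplectic fourfold (deformation equivalent to $\operatorname{Hilb}^2$ of a K3 surface), carrying a natural polarization $g\in H^2(F(Y);\ZZ)$ from the Pl\"ucker embedding, and endowed with the Beauville--Bogomolov form on $H^2$. The bridge between $Y$ and $F(Y)$ is the Abel--Jacobi correspondence: if $I\subset Y\times F(Y)$ is the universal line with projections $p_Y,p_F$, then
\[
\alpha_Y:=(p_F)_*\, p_Y^*\colon H^4(Y;\ZZ)\to H^2(F(Y);\ZZ)
\]
is a morphism of Hodge structures (of the correct weight after a Tate twist) that restricts to an isomorphism on primitive parts, intertwines the intersection form on $H^4_o(Y)$ with a positive multiple of the Beauville--Bogomolov form, and sends $\kappa_Y^2$ to a rational multiple of $g$.

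With $\alpha_Y$ and $\alpha_{Y'}$ in hand, the first step is purely formal: any Hodge isometry $\phi\colon H^4(Y';\ZZ)\to H^4(Y;\ZZ)$ preserving the hyperplane class transports through $\alpha_Y\circ\phi\circ\alpha_{Y'}^{-1}$ to an isomorphism $\tilde\phi\colon H^2(F(Y');\ZZ)\to H^2(F(Y);\ZZ)$ of polarized Hodge structures preserving both $g$ and the Beauville--Bogomolov form.

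The hard part will be to upgrade $\tilde\phi$ to a biregular isomorphism $\psi\colon F(Y')\xrightarrow{\sim}F(Y)$ matching $g$ with $g$: this is a global Torelli statement for the specific family of polarized holomorphic symplectic fourfolds arising as Fano varieties of cubic fourfolds, and is the principal obstacle. I would follow Voisin's approach by specialization to the divisor of cubic fourfolds containing a plane, for which $F(Y)$ is birational to $\operatorname{Hilb}^2(S)$ for a canonically associated K3 surface $S$, so that the classical global Torelli theorem for K3 surfaces produces the desired isomorphism on this divisor. Injectivity of the period map would then be propagated to the whole moduli of cubic fourfolds by a monodromy and connectedness argument together with a careful analysis of the boundary loci in the Baily--Borel compactification (alternatively, one could now invoke Verbitsky's global Torelli for hyperk\"ahler manifolds, which was unavailable at the time of Voisin's thesis).

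Finally, I would recover the desired projective isomorphism $Y\cong Y'$ from $\psi$ by reconstructing $Y$ from $F(Y)$: a general point $x\in Y$ is recovered from the finite subvariety of $F(Y)$ parametrizing lines on $Y$ through $x$, producing a bijection $Y\to Y'$ induced by $\psi$ that maps hyperplane sections to hyperplane sections (since $\psi^*g=g$); by the definition of an abstract hypersurface of degree $d$, such a bijection is induced by a unique element of $\PGL(6)$. For uniqueness, if $h_1,h_2\colon Y'\to Y$ are two projective isomorphisms inducing the same Hodge isometry, then $h_2^{-1}h_1\in\aut(Y')$ acts trivially on $H^4(Y';\ZZ)$; since the action of $\aut(Y')$ on $H^4_o(Y';\ZZ)$ is faithful (as one sees via Griffiths' residue description of $H^{3,1}(Y')=\CC\cdot\omega(F')$ together with the smoothness of $Y'$), this forces $h_1=h_2$.
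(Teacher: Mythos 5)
The paper does not prove this statement: it is quoted verbatim as Voisin's Torelli theorem, with references to \cite{voisin} for the original proof and to \cite{looijenga:4f} and \cite{laza2} for independent later treatments. So there is no in-paper argument to compare yours against; what can be assessed is whether your sketch would stand on its own, and as written it does not. You correctly identify the skeleton of Voisin's strategy (Fano variety of lines, Beauville--Donagi, the Abel--Jacobi map $\alpha_Y$, specialization to cubics containing a plane, reconstruction of $Y$ from $F(Y)$), but the two steps that carry all the content are explicitly deferred: the global Torelli statement for the polarized family $F(Y)$ is labelled ``the principal obstacle'' and then handled by ``follow Voisin's approach'' and ``a monodromy and connectedness argument,'' and the reconstruction of $Y$ from $(F(Y),g)$ is asserted in one sentence. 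The second step is genuinely delicate: from an abstract isomorphism $\psi\colon F(Y')\to F(Y)$ with $\psi^*g=g$ one must produce an isomorphism of the incidence correspondences, i.e.\ show that $\psi$ carries the family of surfaces $\{\ell\in F(Y):x\in\ell\}$ to the corresponding family for $Y'$; this is not automatic from preserving the Pl\"ucker polarization and requires an argument (Voisin reconstructs $Y$ from $F(Y)$ together with the \emph{correspondence}, not from the polarized variety alone in one line). A proof that consists of naming these steps is a roadmap, not a proof.

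Two smaller points. First, the Abel--Jacobi map does not intertwine the intersection form with a \emph{positive} multiple of the Beauville--Bogomolov form on primitive parts: $H^4_o(Y)$ has signature $(20,2)$ while the primitive $H^2$ of $F(Y)$ has signature $(2,20)$ for the Beauville--Bogomolov form, so Beauville--Donagi give $q(\alpha x,\alpha y)=-\langle x,y\rangle$; the sign matters if you want $\tilde\phi$ to be an isometry of the correct polarized structure. Second, your uniqueness argument is essentially right but the parenthetical justification of faithfulness of $\aut(Y')$ on $H^4_o$ via the residue description of $H^{3,1}$ alone is insufficient (an automorphism could fix $\omega(F')$ without being trivial); the standard argument uses that a projective automorphism acting trivially on all of $H^4$ fixes the period point of every deformation it extends to, or more directly the classical fact that a linear automorphism of $\PP^5$ fixing a smooth cubic fourfold pointwise-in-cohomology must be the identity, which requires its own (short) proof.
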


If we combine this with Proposition \ref{prop:equiv}, then it follows that $\Psi$ itself is also an open embedding. According to \ref{prop:orbitspacesemb}, $\Psi_k$ is a restriction of $\Psi$ and so the same is true for $\Psi_k$.
Our reason for considering this slight refinement of the classical period map is that it maps automorphic forms
on $\PP\Omega_k$ to $\CC[P_{6-k}]^{\SL(n-k)}$. For a {$\G_k$-automorphic form on 
$\Omega_k$ of weight $d$} is simply a $\G_k$-invariant analytic function $f:\Omega_k\to \CC$ that is homogenenous of degree $d$ relative to the $\CC^\times$-action inverse to scalar multiplication
(so $-d$ relative to scalar multiplication) that is subject to certain growth conditions (which are in fact empty unless
$k=3$). It is clear that $\Psi_k$ takes such a function to one on $Sm^3_{6-k}$ which is complex-analytic, homogeneous and
$\SL (6-k)$-invariant. We will see that the latter is always the restriction of a $\SL (6-k)$-invariant homogeneous  polynomial on $S^3_{6-k}$.

\section{The fundamental arithmetic arrangement}

We recall from \cite{looijenga:4f} the following notion.

\begin{definition}
A vector $v\in\Lambda_o$ is called \emph{special} if $v\cdot v=6$  and $v-\eta$ is divisible by $3$ and is called
\emph{nodal} $v\cdot v=2$. A hyperplane in $V$ is called \emph{special} resp.\ \emph{nodal} if it is  perpendicular to such a vector.
\end{definition}
 
A simple argument in lattice theory shows that the $3$-divisibility of $v-\eta$ can be expressed in terms of $\Lambda_o$ only:
it is equivalent to requiring that the inner product of $v$ with any element of $\Lambda_o$ 
lies in $3\ZZ$. Notice that if $-v+\eta=3e$, then $e\cdot e=e\cdot\eta=1$.

We denote the collection of special resp.\ nodal hyperplanes by $\Hcal_s$ resp.\ $\Hcal_n$. It is shown in \cite{looijenga:4f} (the proof is not difficult) 
that each of these  is a single $\G$-orbit. It is clear that every such hyperplane has signature $(19,2)$ and hence  will meet $\Omega$. The union of these two collections of hyperplanes is locally finite in $\Omega$. So the complement of the union of these hyperplanes in $\Omega$ is an open subset $\intomega\subset\Omega$. 
The following theorem 
is independently due to  Laza \cite{laza1}, \cite{laza2} and the author \cite{looijenga:4f}. 

\begin{theorem}\label{thm:surj}
The map $\tilde{\Psi}$ maps $\SL(6)\bs \widetilde{Sm}{}^3_{6}$ isomorphically onto the arrangement complement $\intomega$, and hence  $\Psi$ maps $\SL(6)\bs Sm^3_{6}$ isomorphically onto $\G\bs\intomega$.
\end{theorem}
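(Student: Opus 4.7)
The plan is to combine Voisin's global Torelli for cubic fourfolds with a comparison of compactifications of both the GIT moduli side and the period side. Since $\widetilde{\Psi}$ is a $\G$-covering of $\Psi$, the two assertions are equivalent, and I will focus on $\Psi$. First, local Torelli for cubic fourfolds gives that $\Psi$ is locally an open immersion, and Voisin's theorem combined with the reduced $\CC^\times$-equivariance of Proposition \ref{prop:equiv} gives that $\Psi$ is injective on orbit spaces. Hence $\Psi$ is an open embedding into $\G\bs\Omega$, and the remaining task is to identify its image with $\G\bs\intomega$.

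To see that the image lies in $\G\bs\intomega$, I would check that a smooth cubic fourfold cannot have period on a nodal or special hyperplane. For a nodal hyperplane $v^{\perp}$ with $v\in\Lambda_o$ and $v\cdot v=2$, a period on $v^{\perp}$ realises $v$ as an integral Hodge class of type $(2,2)$ orthogonal to $\eta$ with self-intersection $2$; such a class behaves like the vanishing cycle of a degeneration of $Y$ to a nodal cubic fourfold, contradicting smoothness. For a special hyperplane $v^{\perp}$ with $v\cdot v=6$ and $v\equiv\eta\pmod 3$, the integral class $e=(\eta-v)/3$, which satisfies $e\cdot e=e\cdot\eta=1$, would force $Y$ to lie in the closure of the chordal cubic orbit (the $\SL(6)$-orbit of the secant variety of the Veronese $\PP^2\subset\PP^5$); this chordal cubic is singular along the Veronese and so is not in $Sm^3_6$.

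For the reverse inclusion, the plan is to extend $\Psi$ across the boundaries. On the source one uses the projective GIT quotient $\SL(6)\bss sSt^3_6$; on the target one uses a Looijenga-type semitoric compactification of $\G\bs\intomega$ adapted to the arrangement $\Hcal_s\cup\Hcal_n$ (as in \cite{looijenga:4f}). Both are normal projective varieties of dimension $20$, and one shows that $\Psi$ extends to a morphism between them. The GIT boundary strata coming from cubic fourfolds with isolated ADE singularities map onto the $\Hcal_n$-strata, while the strictly semistable orbit given by the chordal cubic maps onto the $\Hcal_s$-strata. Combined with the openness from Step~1, Zariski's main theorem then forces the extended morphism to be an isomorphism between the compactifications, yielding the desired isomorphism on the open parts.

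The hard part will be this final extension and matching. It requires a detailed Hilbert--Mumford analysis of strictly semistable cubic fourfolds (the destabilizing one-parameter subgroups lead precisely back to partial Fermat-type configurations, in the spirit of Proposition \ref{prop:orbitspacesemb}) together with an explicit description of the chosen compactification of $\G\bs\intomega$ at each arrangement divisor so that the boundary strata on the two sides can be put into bijection. This boundary analysis is exactly the substance of the Laza--Looijenga result recorded here, and is where the technical core of \cite{laza1}, \cite{laza2}, \cite{looijenga:4f} lies.
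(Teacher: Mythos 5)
The first thing to note is that the paper does not prove this theorem at all: it is stated with the attribution ``independently due to Laza \cite{laza1}, \cite{laza2} and the author \cite{looijenga:4f}'' and is then used as the input from which the lower-dimensional cases are deduced. So there is no internal proof to compare yours against; the relevant comparison is with the cited works, whose overall architecture your sketch does reproduce correctly (local Torelli, Voisin's theorem and Proposition \ref{prop:equiv} give an open embedding; one must then identify the image with $\G\bs\intomega$ by excluding the two arrangement divisors and proving surjectivity via a comparison of compactifications).

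As a proof, however, your proposal has genuine gaps at exactly the points where the theorem is hard. First, your exclusion of the nodal and special hyperplanes is heuristic: a period lying on $v^{\perp}$ does produce an integral $(2,2)$-class $v$ with $v\cdot v=2$ (respectively a class $e$ with $e\cdot e=e\cdot\eta=1$), but this does not by itself contradict smoothness or ``force $Y$ into the closure of the chordal orbit''---a smooth variety may perfectly well carry integral Hodge classes lying in the monodromy orbit of vanishing cycles of some degeneration. Turning these numerical data into a contradiction requires either the algebraicity of integral $(2,2)$-classes on cubic fourfolds together with an analysis of which surfaces can represent them (this is precisely the delicate point addressed in Voisin's erratum for the chordal case), or else it must be extracted from the same boundary analysis as the surjectivity. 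Second, the surjectivity step is not carried out: you describe the correct strategy (extend $\Psi$ to a morphism from $\SL(6)\bss sSt^3_6$ to a compactification of $\G\bs\intomega$ adapted to $\Hcal_s\cup\Hcal_n$ and invoke Zariski's main theorem), but then state that the required extension and stratum-by-stratum matching ``is exactly the substance of the Laza--Looijenga result.'' That is an accurate description of where the work lies, but it makes the proposal an outline of the cited proofs rather than a proof: the Hilbert--Mumford classification of semistable and boundary cubic fourfolds, the verification that the extended map is defined and proper near the chordal locus, and the identification of the boundary divisors on the two sides are all left unsupplied.
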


This immediately leads to corresponding results for cubics of lower dimension.
The case of cubic surfaces is due to Allcock-Carlson-Toledo \cite{act1}, and the case of cubic threefolds was obtained independently by Allcock-Carlson-Toledo \cite{act2} and Looijenga-Swierstra \cite{ls1} (the case of cubic curves is of course classical). The uniform approach that we take here was developed essentially (although somewhat differently) in the thesis of Swierstra \cite{swierstra} and provides among other things a new proof for the case of cubic surfaces.
 
Let us write $\intomega_k$ for  $\intomega\cap V_k$.

\begin{corollary}
For $k\ge 1$, the period map $\tilde\Psi_k$ defines an isomorphism of $\SL(6-k)\bs\widetilde{Sm}{}^3_{6-k}$ onto $\mu_3\bs\intomega_k$ and (hence) drops to an isomorphism  $\Psi_k$ of $\SL(6-k)\bs Sm^3_{6-k}$ onto
$\G_k\bs\intomega_k$.
\end{corollary}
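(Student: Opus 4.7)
The plan is to extract the corollary from Theorem \ref{thm:surj} by restricting the biregular isomorphism $\tilde\Psi:\SL(6)\bs\widetilde{Sm}{}^3_6\overset{\sim}{\to}\intomega$ to the $G_k$-equivariant loci on each side. On the moduli side these are the marked cubic fourfolds admitting a compatible $G_k$-action, which by Proposition \ref{prop:orbitspacesemb} arise precisely from cubic $(4-k)$-folds; on the period side these are the points of $\intomega$ whose period line lies in the $\chi_k$-eigenspace $V_k$, i.e., $\intomega_k$.

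First I would verify that $\tilde\Psi_k$ is well-defined into $\mu_3\bs\intomega_k$. A $k$-marked cubic $(4-k)$-fold determines, via Propositions \ref{prop:orbitspacesemb} and \ref{prop:marking}, a partial Fermat cubic fourfold $\hat Y=\{F+F_k=0\}$ with the standard $G_k$-action and a compatible marking; since $\omega(F+F_k)$ transforms by the character $\chi_k$, the period point lands in $V_k\cap\intomega=\intomega_k$. For the ambiguity: the $\Scal_k$-factor of $G_k$ acts trivially on $V_k$, because each character-basis vector $\chi^K$ spanning $V_k$ has its last $k$ coordinates all equal to $2\in\ZZ/3$ and is therefore fixed by permutations of those coordinates; and $\mu_3{}^k\subset G_k$ acts on $V_k$ by scalars via $\chi_k$, whose image is $\mu_3\subset\CC^\times$. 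Thus the $G_k$-ambiguity of the marking descends to a $\mu_3$-ambiguity of the period.

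For the inverse construction: given $\omega\in\intomega_k$, Theorem \ref{thm:surj} supplies a unique $\SL(6)$-orbit of pairs $(\hat Y,\phi)$ with $\tilde\Psi(\phi)=\omega$. Transporting the $G_k$-action on $\Lambda$ (which fixes the line $\CC\omega\subset V_k$) through $\phi$ yields a $G_k$-equivariant Hodge isometry of $H^4(\hat Y)$. Voisin's Torelli theorem then lifts this uniquely to a projective $G_k$-action on $\hat Y$. The hypothesis of Proposition \ref{prop:marking} holds automatically: $u_1$ acts on $\Lambda_o\cong R'_5$ with minimal polynomial $1+T+T^2$, so it has no nonzero fixed vector in $H^4_o$. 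Hence $\hat Y$ is projectively equivalent to a partial Fermat cubic, arising from a unique cubic $(4-k)$-fold together with a $k$-marking descended from $\phi$.

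The point requiring the most care is verifying that the scalar ambiguity in the period map matches precisely the $\mu_3$-quotient, and not a larger one such as $G_k$, $\G_k$, or the full stabilizer $G_k\cdot\CC^\times\subset\GL(6)$ appearing in Proposition \ref{prop:orbitspacesemb}. The $\CC^\times$-part is already absorbed by the equivariance of Proposition \ref{prop:equiv}, and the residual $G_k$-part contracts to $\mu_3$ on $V_k$ by the character computation just quoted. With this checked, $\tilde\Psi_k$ is the restriction of the biregular $\tilde\Psi$ to the locally closed subvariety of $\intomega$ cut out by the linear conditions defining $V_k$, hence is itself an isomorphism of algebraic varieties. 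Finally, $\Psi_k$ is obtained from $\tilde\Psi_k$ by the further quotient by $\G_k/G_k$, which was noted to act simply transitively on the $k$-markings of any fixed cubic $(4-k)$-fold.
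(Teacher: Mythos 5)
Your proof is correct and follows essentially the same route as the paper: restrict Theorem \ref{thm:surj} to the $\chi_k$-eigenlocus, use Voisin's Torelli theorem (with its uniqueness clause) to transport the $G_k$-action on $\Lambda$ to a projective $G_k$-action on the fourfold, and then invoke Proposition \ref{prop:marking} to recognize it as a partial Fermat cubic. Your explicit verification of the hypothesis of Proposition \ref{prop:marking} (via the minimal polynomial of $u_1$ on $R'_5$) and of the reduction of the $G_k$-ambiguity to $\mu_3$ on $V_k$ are details the paper leaves implicit, but the argument is the same.
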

\begin{proof}
It is clear that $\tilde{\Psi}_k$ takes values in $\intomega_k$. 

If the marked
the cubic forms $F,F'\in Sm^3_{6-k}$  have the same image in $\intomega_k$, then by Voisin's Torelli theorem there is a $g\in \SL(6)$ that takes $F+F_k$ to $F'+F_k$ such that the induced isomorphism
$Y(F+F_k)\cong Y(F'+F_k)$ is compatible with the markings. The latter is in particular equivariant for
the actions of $G_k$ on the cohomology of these $4$-folds. Since the projective automorphism group of
a smooth cubic fourfold is faithfully represented on its cohomology, it follows that
the isomorphism in question is already $G_k$-equivariant. This means that the marked cubic forms  lie in the same $\SL (6-k)$-orbit. 

As for surjectivity, if $\omega\in\intomega_k$, then by Theorem \ref{thm:surj}, $\omega$ is represented
by a marked cubic form $(F,\phi)$. Since $\omega$ transforms via $G_k$ under the character $\chi_k$, it
follows that $F$ is $G_k$-invariant. Hence  the last $k$ variables of $F$  separate as  multiple of $F_k$. So $(F,\phi)$ naturally underlies a marked cubic $(4-k)$-fold.
\end{proof}

Denote by $S\! f^3_6\subset S^3_6$ the space of cubic forms with isolated singularities with finite local monodromy groups. According to Arnol'd the singularities with that property are analytically isomorphic to singularities with a local equation of the form $f(z_1,z_2,z_3)+z_4^2+z_5^2$, where $f$ defines a Kleinian singularity. They are indexed by  the Weyl group labels $A_*, D_*, E_*$, as these describe  the local monodromy groups.  It is known that the period map extends to  $S\! f^3_6\subset S^3_6$. 
According to Yokoyama \cite{yokoyama} and Laza \cite{laza1}  such a cubic form is stable:
$S\! f^3_6\subset St^3_6$. 

In the cited references Laza and the author also show: 

\begin{theorem}\label{thm:main}
Denote by $\Omega^f$ the complement in $\Omega$ of the union of the hyperplanes in 
$\Hcal_s$. Then the period map $\Psi$ extends to an isomorphism from  
$\SL(6)\bs S\! f^3_{6}$ onto $\G\bs\Omega^f$.
\end{theorem}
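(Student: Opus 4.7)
The plan is to bootstrap from Theorem \ref{thm:surj}, extending the domain of $\Psi$ to cubics with ADE singularities and the codomain to include the nodal hyperplane arrangement. The first move is to construct the extension $\bar\Psi : \SL(6)\bs S\! f^3_{6} \to \G\bs\Omega$. For $F\in S\! f^3_{6}$, each singularity of $Y(F)$ is analytically a suspension of a Kleinian singularity, so in any smooth one-parameter deformation the local monodromy is a finite Weyl group. The limit mixed Hodge structure is therefore pure, the period map extends holomorphically across $F$ (either by the removable singularity theorem in the period domain, or after a finite base change and simultaneous resolution), and gluing these local extensions together with the stability $S\! f^3_{6}\subset St^3_{6}$ of Yokoyama--Laza \cite{yokoyama,laza1} yields the required $\bar\Psi$.

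Next I would show that $\bar\Psi$ lands in $\G\bs\Omega^f$ and is injective. Vanishing-cycle theory identifies the vanishing cohomology at an ADE point with a sublattice of $\Lambda_o$ spanned by nodal vectors; hence the limit period lies precisely on the associated nodal hyperplanes. The crucial assertion is that it avoids every special hyperplane, and this forces an explicit matching between the GIT stratification of $sSt^3_{6}$ and the arrangement $\Hcal_s\cup\Hcal_n$: each component of $\Hcal_n$ must be the period locus of a genuine ADE degeneration in $S\! f^3_{6}$, while each component of $\Hcal_s$ must record a strictly worse degeneration (simple elliptic singularities, triple points, chordal cubics, etc.) excluded from $S\! f^3_{6}$. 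This boundary-matching is the technical heart of the argument, and is where the hard work of \cite{laza2} and \cite{looijenga:4f} enters. Injectivity then follows by extending Voisin's Torelli theorem along simultaneous smoothings: two ADE cubics with identical period can be smoothed through families of smooth cubics with synchronized periods, Voisin makes those smoothings projectively equivalent, and the equivalence degenerates to an $\SL(6)$-equivalence of the singular limits.

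Finally, for surjectivity, given $\omega\in\Omega^f\setminus\intomega$, pick a holomorphic disc $\Delta\to\Omega^f$ centred at $\omega$ with $\Delta^*\subset\intomega$; Theorem \ref{thm:surj} realises $\Delta^*$ by a family of smooth cubics, and properness of the GIT quotient $\SL(6)\bs St^3_{6}$, combined with the boundary matching of step two to rule out escape into strictly semistable orbits, forces the $t=0$ limit to be an $\SL(6)$-orbit in $S\! f^3_{6}$ with period $\omega$. Since $\bar\Psi$ is a proper holomorphic bijection and Theorem \ref{thm:surj} already provides an isomorphism on the dense subset $\intomega$, normality of both sides upgrades $\bar\Psi$ to an isomorphism of analytic spaces. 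The main obstacle throughout is the stratum-by-stratum identification of $\Hcal_s$ with the unstable/strictly semistable locus and $\Hcal_n$ with the ADE boundary in step two; this is exactly the geometric content separating the open-embedding of Theorem \ref{thm:surj} from the full isomorphism asserted here.
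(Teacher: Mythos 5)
First, note that the paper itself offers no proof of Theorem \ref{thm:main}: it is stated as a result of Laza \cite{laza1}, \cite{laza2} and of \cite{looijenga:4f}, so there is no internal argument to measure you against. Judged on its own terms, your outline has the right skeleton (holomorphic extension of the period map over the ADE locus via finiteness of the local monodromy, a properness/semistable-replacement argument for surjectivity, and the finite--birational--onto--normal upgrade at the end), but it contains a genuine gap exactly where you flag ``the technical heart'': the assertion that limit periods of ADE degenerations avoid every hyperplane of $\Hcal_s$, and, conversely, that any family of smooth cubics whose periods stay in $\Omega^f$ degenerates to a point of $S\! f^3_{6}$ rather than to a worse stable or strictly semistable orbit. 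You do not establish this; you explicitly delegate it to \cite{laza2} and \cite{looijenga:4f}, which are precisely the papers in which the theorem is proved, so the argument is circular at its decisive step. What is actually required and not supplied is (i) the GIT classification of the non-ADE degenerations (simple elliptic and cusp singularities, cubics with non-isolated singularities, the chordal cubic) together with a computation of their limit mixed Hodge structures showing that the corresponding periods land on $\Hcal_s$ or on the Baily--Borel boundary, and (ii) the verification that an integral $(2,2)$-class forced by an ADE degeneration is a nodal vector ($v\cdot v=2$) and never produces a special one. Without (i) the surjectivity step fails --- a disc of periods in $\Omega^f\setminus\intomega$ could a priori be realized only by a family escaping $S\! f^3_{6}$ --- and without (ii) one cannot even conclude that the extension maps into $\G\bs\Omega^f$.

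A secondary, repairable weakness is the injectivity argument. ``Extending Voisin's Torelli theorem along simultaneous smoothings'' presupposes that two ADE cubics with equal period admit smoothings with synchronized periods; but at an ADE point the period map is a local isomorphism only after dividing by the finite Weyl group of the singularity, so the synchronization is not automatic and this step begs part of the question. It is also unnecessary: once the extension is known to be proper, quasi-finite, and an isomorphism over the dense open subset $\G\bs\intomega$ furnished by Theorem \ref{thm:surj}, normality of $\G\bs\Omega^f$ and Zariski's main theorem give injectivity for free --- this is exactly the mechanism the paper itself uses in the proof of Corollary \ref{cor:sfperiod}. I would recommend restructuring the write-up so that everything is reduced to the boundary-matching statements (i) and (ii), and then either proving those or citing them honestly as the content of the theorem.
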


We use the theory developed in \cite{looijenga:arr} to refine and generalize the above theorem.

Write $\Omega^f_k$ for  $\Omega^f\cap\Omega_k$. It is a priori clear that 
the period map induces a map $\Psi_k:\SL(6-k)\bs S\! f^3_{6-k}\to \G_k\bs\Omega^f_k$.
This implies that $\Omega_k$ is not contained in a special hyperplane. 
Denote by $A_d(\Omega^f_k)^{\G_k}$ the space of $\G_k$-automorphic forms of weight $d$ on  $\PP\Omega^f_k$, that is, the space of $\G_k$-invariant complex-meromorphic functions on $\PP\Omega_k$  of weight $d$ that are regular on $\PP\Omega^f_k$, subject to a growth condition when $k=3$ (in which case $\PP\Omega^f_3=\PP\Omega_3$ is the unit disk). Here $d$  may be any integer.  The direct sum of these spaces forms a graded $\CC$-algebra $A_\bullet(\Omega^f_k)^{\G_k}$. In what follows we exclude the somewhat special classical case $k=3$ (for which the period map is one between one dimensional varieties). 

It is an easy fact, and proved in \cite{looijenga:4f},  that if a positive semidefinite sublattice of $\Lambda_o$ is spanned by special vectors, then its rank is $\le 2$. Together with the 
inequality $\dim\Omega_k > 5$ this implies, according to \cite{looijenga:arr},  the following lemma (which for $k=0$ is in fact a step in our proof of Theorem \ref{thm:main}).

\begin{lemma}
For $k=0,1,2$ the $\CC$-algebra $A_d(\Omega^f_k)^{\G_k}$ is normal with finitely many generators in  positive degree  and the analytic map
\[
\G_k\bs\Omega^f_k\to \spec(A_d(\Omega^f_k)^{\G_k})
\]
is an open embedding with complement a subcone of codimension $> 1$. 
\end{lemma}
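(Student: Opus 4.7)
The plan is to obtain this lemma as a direct application of the general theory of arithmetic arrangement complements on bounded symmetric domains developed in \cite{looijenga:arr}: the work required is not to redo that theory but only to verify its hypotheses in each of the three cases.

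First I would cast the three situations in the right language. For $k=0$, the ambient domain $\Omega$ is of Type IV, attached to the even lattice $\Lambda_o$, and $\G$ is arithmetic in $\Orth(\Lambda_o\otimes\RR)$. For $k=1,2$, $V_k$ carries a hermitian form of hyperbolic signature $(10,1)$ respectively $(4,1)$, the domain $\Omega_k$ is a complex ball, and $\G_k$ is arithmetic in the associated unitary group. In all three cases the relevant arrangement (either $\Hcal_s$ itself for $k=0$, or the trace $\{H\cap V_k : H\in \Hcal_s\}$ for $k=1,2$) is $\G_k$-invariant and locally finite in $\Omega_k$, the latter because $\Hcal_s$ is a single $\G$-orbit and $\G_k$ acts arithmetically.

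Second, the main theorem of \cite{looijenga:arr} takes as input a ``hyperbolicity'' hypothesis: every positive semidefinite sublattice spanned by arrangement-defining vectors must have rank strictly less than the codimension of a certain boundary stratum of the Baily--Borel compactification of $\Omega_k$. The bound that any positive semidefinite sublattice of $\Lambda_o$ spanned by special vectors has rank $\le 2$ (proved in \cite{looijenga:4f}), together with the dimension inequality $\dim \Omega_k > 5$, supplies precisely this hypothesis. For the cases $k=1,2$ the rank bound is inherited by the restricted arrangement: any positive semidefinite $\Ecal$-family of projections $\pi_k(v)\in V_k$ arises from a positive semidefinite sublattice of $\Lambda_o$ of the same rank spanned by special vectors, so the bound rank $\le 2$ still holds.

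With those hypotheses verified, \cite{looijenga:arr} yields verbatim all four assertions of the lemma: $A_\bullet(\Omega^f_k)^{\G_k}$ is a normal, finitely generated graded algebra concentrated in positive degrees, and the canonical analytic map $\G_k\bs\Omega^f_k \to \spec(A_\bullet(\Omega^f_k)^{\G_k})$ is an open embedding whose complement is a subcone of codimension $>1$. I expect the main obstacle to be careful bookkeeping: confirming that the complex ball cases $k=1,2$ (which involve unitary rather than orthogonal arithmeticity) are genuinely within the scope of the general theorem, and that the rank-$\le 2$ bound really does transfer under restriction from $\Lambda_o\otimes\CC$ to $V_k$ without losing the codimension margin needed to conclude that the exceptional locus has codimension $>1$.
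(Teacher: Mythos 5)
Your proposal matches the paper's own treatment: the lemma is deduced exactly by verifying the hypotheses of the general theory of \cite{looijenga:arr}, namely the rank $\le 2$ bound for positive semidefinite sublattices of $\Lambda_o$ spanned by special vectors (quoted from \cite{looijenga:4f}) together with the inequality $\dim\Omega_k>5$. The paper in fact gives no further detail than this, so your additional bookkeeping about the restricted arrangements on the ball quotients for $k=1,2$ is, if anything, more explicit than the source.
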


So $\G_k\bs\Omega^f_k$ has the structure of a quasi-affine variety with $\CC^\times$-action and $A_d(\Omega^f_k)^{\G_k}$ can be understood as its (graded) algebra of regular functions.

\begin{corollary}\label{cor:sfperiod}
For $k=0,1,2$ the period map  defines an isomorphism 
$\Psi_k: S\! f^3_{6-k}\to \G_k\bs\Omega^f_k$ and identifies the $\CC$-algebra 
$A_\bullet(\Omega^f_k)^{\G_k}$ with the 
$\CC$-algebra $\CC[S^3_{6-k}]^{\SL (6-k)}$, but rescales  the degrees by a factor $(6-k)/3$.
\end{corollary}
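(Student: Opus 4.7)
The plan is to deduce the corollary from the already-established isomorphism $\Psi:\SL(6)\bs S\! f^3_{6}\cong \G\bs\Omega^f$ of Theorem~\ref{thm:main}, combined with the preceding lemma, which presents $A_\bullet(\Omega^f_k)^{\G_k}$ as the algebra of regular functions on the quasi-affine variety $\G_k\bs\Omega^f_k$.

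I would first dispose of the geometric part. The case $k=0$ is precisely Theorem~\ref{thm:main}, so take $k\in\{1,2\}$. The previous corollary already produces an isomorphism $\SL(6-k)\bs Sm^3_{6-k}\cong \G_k\bs\intomega_k$ on the smooth locus; I extend this across the ADE boundary to an isomorphism $\SL(6-k)\bs S\! f^3_{6-k}\cong \G_k\bs\Omega^f_k$ as follows. For surjectivity, take $\omega\in\Omega^f_k\subset\Omega^f$; Theorem~\ref{thm:main} provides a form $F^*\in S\! f^3_6$ with $[\Psi(F^*)]=[\omega]$, and the $G_k$-invariance of $\omega$, combined with the cohomological characterization of partial Fermat cubics in Proposition~\ref{prop:marking}, forces $F^*$ to be projectively equivalent to a partial Fermat form $F+cF_k$ with $F\in S^3_{6-k}$. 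One then checks that $F^*\in S\! f^3_6$ in turn forces $F\in S\! f^3_{6-k}$ (the singular points of $\{F^*=0\}$ are permuted by $G_k$ and project to singular points of $\{F=0\}$; one verifies that the local monodromy condition is stable under splitting off the $F_k$ summand). Injectivity and compatibility with the smooth-locus isomorphism come from rerunning the proof of the previous corollary: Voisin's Torelli on $S\! f^3_6$ (built into Theorem~\ref{thm:main}) plus the faithfulness of the action of the projective automorphism group of a smooth cubic fourfold on its cohomology.

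For the algebraic identification, I combine the geometric isomorphism with two presentations of algebras of regular functions. By the preceding lemma, $A_\bullet(\Omega^f_k)^{\G_k}=\Gamma(\G_k\bs\Omega^f_k,\Ocal)$ as graded algebras. On the form side, $\spec\CC[S^3_{6-k}]^{\SL(6-k)}$ is the GIT quotient; Yokoyama--Laza together with a direct dimension count show that $\SL(6-k)\bs S\! f^3_{6-k}$ is Zariski-open in this quotient with complement of codimension $>1$, so that $\Gamma(\SL(6-k)\bs S\! f^3_{6-k},\Ocal)=\CC[S^3_{6-k}]^{\SL(6-k)}$. The geometric isomorphism now matches these two graded algebras, and the rescaling of degrees by $(6-k)/3$ is exactly the weight computation $\Psi_k(\lambda^3F)=\lambda^{6-k}\Psi_k(F)$ of Proposition~\ref{prop:equiv}.

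I expect the main obstacle to be controlling the extension across $S\! f^3_{6-k}\setminus Sm^3_{6-k}$, i.e.\ ruling out that a cubic $(4-k)$-fold with an honest ADE singularity has its period on a special hyperplane of $\Omega_k$. In the fourfold case this is the very heart of Theorem~\ref{thm:main}, resolved in \cite{looijenga:4f} and \cite{laza1} by an explicit analysis of which degenerations produce which hyperplane classes. I would argue that the same analysis, restricted to the $G_k$-fixed subspace $V_k$, shows that every special hyperplane meeting $\Omega_k$ corresponds to a degeneration which is \emph{not} of partial Fermat type, so that $F\in S\! f^3_{6-k}$ genuinely lands in $\Omega^f_k$. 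A minor secondary issue is writing out the codimension bound on the stable-but-non-ADE locus in the GIT quotient, which should reduce to a direct dimension count.
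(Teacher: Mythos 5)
Your algebraic half is the paper's argument: both $\SL(6-k)\bs S\!f^3_{6-k}$ inside $\SL(6-k)\bss S^3_{6-k}$ and $\G_k\bs\Omega^f_k$ inside $\spec(A_\bullet(\Omega^f_k)^{\G_k})$ have complement of codimension $>1$, so the two graded algebras are the regular functions on isomorphic quasi-affine varieties, and the rescaling factor $(6-k)/3$ is the weight computation you cite. The geometric half is where you diverge, and where the gap is. The paper does not extend the smooth-locus isomorphism pointwise at all: it observes that $\SL(6-k)\bs S\!f^3_{6-k}\to\G_k\bs\Omega^f_k$ is a \emph{finite} morphism (this rests on Proposition \ref{prop:orbitspacesemb}, which makes $\SL(6-k)\bss S^3_{6-k}\to\SL(6)\bss S^3_6$ finite, combined with Theorem \ref{thm:main}) and \emph{birational} (it is an isomorphism over the dense open subset $\intomega_k$ by the preceding corollary), with normal source and target; Zariski's main theorem then gives the isomorphism, surjectivity included. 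Your replacement of this soft argument by a direct surjectivity check does not go through as written: Proposition \ref{prop:marking} is stated and proved only for \emph{smooth} cubic fourfolds (its proof uses the Lefschetz fixed point theorem on a smooth $Y$, the smoothness of the linear sections $Y_1$, and $\pic(Y)=\ZZ\kappa_Y$), so you cannot invoke it for the singular fourfold $F^*$ that Theorem \ref{thm:main} produces over a point of $\Omega^f_k\setminus\intomega_k$. You would need either a singular analogue of that proposition or the finiteness input; the latter is what actually carries the paper's proof, and it is absent from your proposal.

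The obstacle you single out at the end is also not the real one. Once $F+F_k\in S\!f^3_6$, Theorem \ref{thm:main} already places its period in $\Omega^f$, hence in $\Omega^f_k=\Omega^f\cap\Omega_k$; no analysis of which special hyperplanes meet $\Omega_k$ is needed for well-definedness (and for $k=2$ Proposition \ref{prop:csperiod} shows none do, by a quite different, properness-based argument). The genuinely delicate point in this direction --- which your parenthetical ``the local monodromy condition is stable under splitting off the $F_k$ summand'' glosses over --- is that the Thom--Sebastiani join with $k$ copies of $z^3$ preserves simplicity of singularities only in the direction from the fourfold \emph{down} to the $(4-k)$-fold: in the upward direction it fails (an $A_5$-point of a cubic threefold yields a $\tilde E_8$-point of the fourfold, and $D_4$ yields $\tilde E_6$), so $S\!f^3_{6-k}$ must be read as the locus of $F$ with $F+F_k\in S\!f^3_6$ rather than as all forms in $6-k$ variables with simple singularities. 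Your sketch as written would let such an $F$ into the domain and the map would then leave $\Omega^f_k$.
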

\begin{proof} The map $\SL(6-k)\bs S\! f^3_{6-k}\to \G_k\bs\Omega^f_k$  is a birational, finite morphism between normal varieties. So it is an isomorphism. 
We have that  the complement of  $\SL(6-k)\bs S\! f^3_{6-k}$ in $\SL(6-k)\bss S^3_{6-k}=\spec (\CC[S_{6-k}]^{\SL (6-k)})$ is of codimension $>1$. Similarly, the complement of 
$\G_k\bs\Omega^f_k$ in  $\spec(A_\bullet(\Omega^f_k)^{\G_k})$ is  of codimension $>1$.
Hence $A_\bullet(\Omega^f_k)^{\G_k}$ and $\CC[S_{6-k}]^{\SL (6-k)}$ can be characterized as the algebra of regular functions on these varieties and so the second statement also follows. The rescaling  of the grading  by the factor $(6-k)/3$ is explained 
by the following equality:
if $F\in S^3_{6-k}$, then if we put $\tilde z_i:=t^{1/3}z_i$ we have
\begin{multline*}
\omega (tF+F_k)=\frac{dz_0\wedge\cdots \wedge dz_5}{(tF(z_0,\dots ,z_{5-k})+
F_k(z_{6-k},\dots z_5))^2}=\\
t^{-(6-k)/3}\frac{dz_0\wedge\cdots\wedge dz_{5-k}\wedge d\tilde z_{6-k}\wedge\cdots \wedge  d\tilde z_5}{(F(z_0,\dots ,z_{5-k})+F_k(\tilde z_{6-k},\dots, \tilde z_5))^2}
=t^{-(6-k)/3}\omega(F+F_k).
\end{multline*}
\end{proof}

The next proposition implies that in the case of cubic surfaces, the GIT compactification coincides with the Baily-Borel compactifications, a theorem that is due
to Allcock-Carlson-Toledo \cite{act1}:

\begin{proposition}\label{prop:csperiod}
The domain  $\Omega_2$ does not meet a special hyperplane so that
we have an isomorphism  $\Psi_2: S\! f^3_4\to \G_k\bs\Omega_2$ and identifies the $\CC$-algebra $A_\bullet(\Omega_2)^{\G_2}$ with the 
$\CC$-algebra $\CC[S^3_4]^{\SL (4)}$.
\end{proposition}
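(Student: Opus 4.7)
The second sentence of the proposition (the isomorphism $\Psi_2$ and the identification of $\CC$-algebras) is a formal consequence of Corollary~\ref{cor:sfperiod} applied to $k=2$: the entire argument used there to relate $\SL(4)\bs S\!f^3_{4}$ to $\G_2\bs \Omega^f_2$ works verbatim with $\Omega_2$ in place of $\Omega^f_2$ once one knows $\Omega^f_2=\Omega_2$. I therefore focus the plan on the first sentence: for every special vector $v\in\Lambda_o$ (i.e.\ $v\cdot v=6$ and $v-\eta\in 3\Lambda$) the hyperplane $v^\perp$ does not meet $\Omega_2$.

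The first reduction is to a hermitian positivity statement on $V_2$. Decompose $v\otimes 1$ in $V=\Lambda_o\otimes\CC$ into $G_2$-isotypical components. Since $G_2$ preserves the bilinear form, $V_\chi\cdot V_{\chi'}=0$ unless $\chi\chi'=1$; in particular for $\omega\in V_2=V_{\chi_2}$ one has $\omega\cdot v=\omega\cdot v_{\bar\chi_2}$, and the hyperplane $v^\perp\cap V_2$ is the $h$-orthogonal complement of $v_{\chi_2}\in V_2$ with respect to the hermitian form $h(\alpha,\beta)=\alpha\cdot\bar\beta$ of signature $(4,1)$. Such a hyperplane meets the negative cone $\Omega_2=\{h<0\}$ precisely when $h(v_{\chi_2},v_{\chi_2})\ge 0$. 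It therefore suffices to prove the strict inequality $h(v_{\chi_2},v_{\chi_2})<0$ for every special $v$.

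The next step is an analysis of the $G_2$-isotypes of $V$. Using Corollary~\ref{cor:pham} to identify $\Lambda_o\otimes\CC$ with $R'_5\otimes\CC$ and its explicit character basis indexed by tuples $K\in\{1,2\}^6$ with $\sum k_i\equiv 0\pmod 3$, one reads off how $G_2=\Scal_2\ltimes\mu_3{}^2$ (acting on the last two $u$'s) decomposes the space. Only the characters $\chi_2$ (from $k_4=k_5=1$) and $\bar\chi_2$ (from $k_4=k_5=2$) appear as one-dimensional $G_2$-subrepresentations, each with multiplicity $5$; the remaining characters (with $k_4\neq k_5$) pair into two-dimensional irreducibles that assemble into a subspace $V'\subset V$ of complex dimension $12$. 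Since $V$ has real signature $(20,2)$ and $V_{\chi_2}\oplus V_{\bar\chi_2}$ contributes signature $(8,2)$, the subspace $V'\otimes\RR$ is positive definite of signature $(12,0)$. By isotypical orthogonality, $V_{\chi_2}\cdot V'=0$, so for real $v=v_{\chi_2}+\overline{v_{\chi_2}}+v'$ one has $v\cdot v=2h(v_{\chi_2},v_{\chi_2})+v'\cdot v'$; combined with $v\cdot v=6$, the desired inequality $h(v_{\chi_2},v_{\chi_2})<0$ is equivalent to $v'\cdot v'>6$.

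Finally one must prove this last inequality for every special $v$. Writing $v=\eta+3w$ with $w\in\Lambda$, $w\cdot w=1$, $w\cdot\eta=-1$ (forced by $v\cdot v=6$ and $v\cdot\eta=0$), an equivalent form of the statement is: for every such integral $w$ the orthogonal projection $w'$ of $w$ onto $V'$ satisfies $w'\cdot w'>2/3$. This is the technical heart of the proof and is where I expect the main obstacle to lie. The approach is a direct lattice-theoretic verification using Pham's description of $\Lambda_o$ (Corollary~\ref{cor:pham}) together with the fact that $\Lambda^{G_2}=\ZZ\eta$ and $\Lambda_o^{G_2}=0$ (the latter because no tuple $K\in\{1,2\}^6$ gives the trivial $G_2$-character): these constraints force $w$ to have a non-trivial component in $V'$, and the integrality of $w$ together with the positive-definiteness of $V'\otimes\RR$ yields a quantitative lower bound. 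Packaging the estimate cleanly—perhaps by giving an explicit model for the Eisenstein hermitian lattice $L_2=\Ecal\otimes_{\chi_2}\Lambda_o$ provided by Corollary~2.5 and checking the bound there—appears to be the real work; a more conceptual alternative would interpret a hypothetical counterexample as a partial Fermat cubic fourfold of order $2$ carrying a Hassett-type ``special'' algebraic class, and then derive a contradiction from incompatibility with the $G_2$-symmetry.
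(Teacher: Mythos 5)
Your reduction is sound as far as it goes. The isotypical decomposition $V=V_{\chi_2}\oplus V_{\bar\chi_2}\oplus V'$ with $\dim_\CC V'=12$ and $V'$ positive definite is correct (it follows from Lemma \ref{lemma:signatures} together with your character count), and the chain of equivalences ``$v^\perp$ misses $\Omega_2$'' $\Leftarrow$ ``$h(v_{\chi_2},v_{\chi_2})<0$'' $\Leftrightarrow$ ``$v'\cdot v'>6$'' is the right reformulation. (Minor point: when $h(v_{\chi_2},v_{\chi_2})=0$ with $v_{\chi_2}\neq 0$ the hyperplane is tangent to the ball and still misses the open domain, so your strict inequality is sufficient but not necessary.) The problem is that the proof stops exactly where the content begins: you never establish $v'\cdot v'>6$ (equivalently $w'\cdot w'>2/3$) for every special vector, and you say so yourself. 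Nothing in the general setup forces a norm-$6$ vector to project onto $V'$ with norm greater than $6$; this is a genuinely arithmetic property of the pair $(\Lambda_o, G_2)$, and without it the first --- and only nontrivial --- assertion of the proposition is unproved. Tellingly, the remark immediately following the proposition in the paper records precisely this lattice-theoretic reformulation (no special $v$ with $u_4(v)=u_5(v)$ spanning with $u_5(v)$ a positive definite rank-two lattice) as a \emph{consequence} of the proposition, not as the route to it.

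The paper's actual proof runs in the opposite direction and is global rather than lattice-theoretic: the unique strictly semistable closed $\SL(4)$-orbit of cubic surfaces is the $3A_2$ surface $Z_0^3-Z_1Z_2Z_3=0$, whose associated cubic fourfold has three $\tilde E_6$ singularities, near which the period map tends to the Baily--Borel boundary. Hence $\SL(4)\bs S\!f^3_4\to \G_2\bs\Omega_2$ is proper; since by Corollary \ref{cor:sfperiod} it is an open embedding onto $\G_2\bs\Omega^f_2$, properness forces surjectivity and therefore $\Omega^f_2=\Omega_2$. The non-intersection with special hyperplanes is thus \emph{deduced} from the geometry of the period map, trading your unproved lattice estimate for known GIT input (Yokoyama/Laza) plus the degeneration behaviour of periods. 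To salvage your approach you would have to actually carry out the bound on the $V'$-component of an integral $w$ with $w\cdot w=1$, $w\cdot\eta=-1$; as written, the argument is a correct reduction, not a proof.
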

\begin{proof}[Sketch of proof]
There is only one strictly semistable closed orbit of cubic surfaces, namely the one with
three $A_2$ singularities, represented by $Z_0^3-Z_1Z_2Z_3=0$. It corresponds to a 
cubic fourfold with three singularities of type $\tilde E_6$. The period map goes near these singularities to the Baily-Borel boundary. This means that we actually get a proper map
from $\SL(4)\bs S\! f^3_{4}$ to $G_k\bs\Omega_2$.  This can only happen if $\Omega_2=\Omega^f$. In other words, $\Omega_2$ does not meet a special hyperplane.
\end{proof} 

\begin{remark}
In view of Corollary \ref{cor:sfperiod}, the preceding proposition is in fact a statement
about lattices: it says that there is no special vector $v\in \Lambda_o$ such that 
$u_4(v)=u_5(v)$ and the span $v$ and $u_5(v)$ is a positive definite lattice of rank two
(we recall that $u_k$ is the $(k+1)$th generator of $\mu_3{}^6$ and hence acts on $\Lambda_o$).

\end{remark}

\end{document}